\documentclass[a4paper]{elsarticle}
\usepackage[english]{babel}
\usepackage{amsfonts}
\usepackage{bm}
\usepackage{xcolor}
\usepackage{amsthm}
\usepackage{latexsym}
\usepackage{amsmath}
\usepackage{amssymb}
\usepackage{array}
\usepackage{arydshln}
\usepackage{tikz}
\usepackage{graphicx}
\usetikzlibrary{matrix,automata,arrows,decorations.pathmorphing,backgrounds,positioning,fit,petri}
%\DeclareGraphicsRule{.tif}{png}{.png}{`convert #1 `dirname #1`/`basename #1 .tif`.png}
%\usepackage{fullpage}
%\usepackage{showframe}

\makeatletter
\def\ps@pprintTitle{%
 \let\@oddhead\@empty
 \let\@evenhead\@empty
 \def\@oddfoot{}%
 \let\@evenfoot\@oddfoot}
\makeatother

%\journal{Journal of Mathematical Analysis and Applications}

\definecolor{te}{HTML}{C11B17}% 000000 black F62217
\newcommand{\ww}[1]{\textcolor{black}{#1}}
\newcommand{\cc}[1]{\textcolor{black}{#1}}
\newcommand{\lu}[1]{\textcolor{black}{#1}}

\newtheorem{theorem}{Theorem}
\newtheorem*{result}{Theorem A}
\newtheorem*{result2}{Theorem B}

\newtheorem{corollary}{Corollary}

\newtheorem{definition}{Definition}

\newtheorem{lemma}{Lemma}

\newtheorem{proposition}{Proposition}
\newtheorem{remark}{Remark}

\numberwithin{equation}{section}

\begin{document}
\title{On the abelian complexity of the Rudin-Shapiro sequence\tnoteref{t1}}
\tnotetext[t1]{This work was supported by the Fundamental Research Funds for the Central Universities (Project No. 2662015QD016) and NSFC (Nos. 11401188, 11371156, 11431007). The research of Wen Wu was also partially supported by the Academy of Finland, the Centre of Excellence in Analysis and Dynamic Research.}
%%%%%%%%%%%%%%%%%%%%%%%%%%%%%%%%%%%%%%%%
\author[hust]{Xiaotao L\"{u}}
\ead{M201270021@hust.edu.cn}
%%%%%%%%%%%%%%%%%%%%%%%%%%%%%%%%%%%%%%%%
\author[hzau]{Jin Chen}
\ead{wind.golden@gmail.com}
%%%%%%%%%%%%%%%%%%%%%%%%%%%%%%%%%%%%%%%%
\author[hust]{Zhixiong Wen}
\ead{zhi-xiong.wen@mail.hust.edu.cn}
%%%%%%%%%%%%%%%%%%%%%%%%%%%%%%%%%%%%%%%%
\author[hubu,oulu]{Wen Wu\corref{cor1}}
\ead{hust.wuwen@gmail.com}
\cortext[cor1]{Corresponding author.}
%%%%%%%%%%%%%%%%%%%%%%%%%%%%%%%%%%%%%%%%
\address[hust]{School of Mathematics and Statistics, Huazhong University of Science and Technology, Wuhan 430074, China}
\address[hzau]{College of Science, Huazhong Agricultural University, Wuhan 430070, China}
\address[hubu]{Faculty of Mathematics and Statistics, Hubei University, Wuhan 430062, China}
\address[oulu]{Mathematics, P.O. Box 3000, 90014 University of Oulu, Finland}
\date{}                                  % Activate to display a given date or no date

\begin{abstract}
In this paper, we study the abelian complexity of the Rudin-Shapiro sequence and a related sequence. We show that these two sequences
share the same complexity function $\rho(n)$ which satisfies certain recurrence relations. As a consequence, the abelian complexity function is
$2$-regular. Further, we prove that the box dimension of the graph of the asymptotic function $\lambda(x)$ is $3/2$ where $\lambda(x)=\lim_{k\to\infty}\rho(4^{k}x)/\sqrt{4^{k}x}$ and $\rho(x)=\rho(\lfloor x\rfloor)$ for any $x> 0$.
\end{abstract}
\begin{keyword}
Rudin-Shapiro sequence\sep   Abelian complexity\sep $k$-regular sequence \sep Automatic sequence \sep Box dimension
\MSC[2010]{28A80\sep 11B85}
\end{keyword}
\maketitle

%\tableofcontents

\section{Introduction}
The abelian complexity of infinite words has been examined by Coven and Hedlund in
\cite{CH73} as an alternative way to characterize periodic sequences and Sturmian sequences.
Richomme, Saari, and Zamboni introduced this notion formally in \cite{RSZ11} which
initiated a general study of the abelian complexity of infinite words over finite alphabets.
For example, the abelian complexity functions of some notable sequences, such as the Thue-Morse sequence and all Sturmian sequences, were studied in \cite{RSZ11} and \cite{CH73} respectively. There also many other works devoted to this subject, see \cite{BBT11,BR13,CR11, RSZ10} and references therein. In the following, we will give the definition of the abelian complexity.

Let $\mathbf{w}=w(0)w(1)w(2)\cdots $ be an infinite sequence on a finite alphabet $\mathcal{A}$.  Denote by ${\mathcal{F}_{\mathbf{w}}(n)}$ the set of all
factors of $\mathbf{w}$ of length $n$, i.e., \[{\mathcal{F}_{\mathbf{w}}}(n): = \{w(i)w(i + 1)\cdots w(i + n - 1) : i \geq 0 \}.\]
Two finite words $u$, $v$ over a same alphabet $\mathcal{A}$ is \emph{abelian equivalent} if $|u|_{a}=|v|_{a}$ for any letter $a\in\mathcal{A}$. The abelian equivalent induces an equivalent relation, denoted by  $\sim_{ab}$. Now we are ready to state the definition of the abelian complexity.
\begin{definition}
The \emph{abelian complexity function} ${\rho_{\mathbf{w}}}:~\mathbb{N} \to \mathbb{N}$ of $\mathbf{w}$ is defined by
\[\rho_{\mathbf{w}}(n) := \# (\mathcal{F}_{\mathbf{w}}(n)/\sim_{ab}).\]
\end{definition}

First part of this paper is devoted to study the regularity of the abelian complexity of the Rudin-Sharpiro sequence
 $\mathbf{r}=r(0)r(1)r(2)\cdots$  whose generating function $R(z):=\sum_{n\geq 0}r(n)z^{n}$ satisfies the Mahler type functional equation
\[R(z)+R(-z)=2R(z^{2}).\]
Denote the coefficient sequence of $R(-z)$ by $\mathbf{r^{\prime}}$. To state our result, we shall recall the definition of $k$-regular and automatic sequences. (For more detail, see \cite{ALL}.)
\begin{definition}
Let $k\geq 2$ be an integer. The \emph{$k$-kernel} of an infinite sequence $\mathbf{w}=(w(n))_{n\geq 0}$ is the set of sub-sequences
$${\mathbf{K}_k}(\mathbf{w}):=\{ (w({k^e}n + c))_{n \ge 0}~|~ {e \ge 0,0 \le c < k^e}\} .$$
$\mathbf{w}$ is \emph{$k$-automatic} if $\mathbf{K}_k(\mathbf{w})$ is finite. If the $\mathbb{Z}$-module generated by its $k$-kernel is
finitely generated, then $\mathbf{w}=(w(n))_{n\geq 0}$ is \emph{$k$-regular}.
\end{definition}

Now we state our first result.

\begin{result}
The abelian complexity of the Rudin-Shapiro sequence $\mathbf{r}$, which is the same as the abelian complexity of $\mathbf{r}^{\prime}$, is $2$-regular.
\end{result}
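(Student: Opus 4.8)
The goal is to show that the abelian complexity function $\rho = \rho_{\mathbf{r}}$ is $2$-regular, which by definition means that the $\mathbb{Z}$-module generated by the $2$-kernel $\{(\rho(2^e n + c))_{n\ge 0} : e\ge 0,\ 0\le c<2^e\}$ is finitely generated. Since $\rho$ is a function of the Parikh vectors of length-$n$ factors of a $2$-automatic sequence, the natural strategy is to compute $\rho(n)$ explicitly enough to find a finite set of linear recurrences along arithmetic progressions with ratio a power of $2$. The first step is to understand, for the Rudin--Shapiro sequence $\mathbf{r}$ over $\{0,1\}$ (or $\{\pm 1\}$), the set of ``abelian values'' of factors: for a factor $w$ of length $n$, its abelian class is determined by a single integer, say the number of $1$'s, so $\rho(n)$ equals the number of distinct values of $\sum_{i=t}^{t+n-1} r(i)$ as $t$ ranges over $\mathbb{N}$. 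Writing $S(t) = \sum_{i<t} r(i)$ for the summatory function, this count is $\#\{S(t+n) - S(t) : t\ge 0\} = \max_t(S(t+n)-S(t)) - \min_t(S(t+n)-S(t)) + 1$, provided the set of differences is an interval of integers --- a point that must be checked, and which typically follows because consecutive differences change by at most $1$.

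**Key steps, in order.** (1) Reduce $\rho(n)$ to the study of $\max$ and $\min$ over $t$ of $S(t+n)-S(t)$, and verify the interval property so that $\rho(n) = M(n) - m(n) + 1$ where $M(n)$ and $m(n)$ are these extremal differences. (2) Exploit the recursive structure of the Rudin--Shapiro summatory function: the classical identities for $S$ (coming from $r(2n)=r(n)$, $r(2n+1)=(-1)^n r(n)$, equivalently the Mahler equation $R(z)+R(-z)=2R(z^2)$) give formulas for $S(2t)$ and $S(2t+1)$ in terms of $S(t)$ and an auxiliary summatory function (the partial sums of $\mathbf{r}'$). This is exactly why the paper pairs $\mathbf{r}$ with $\mathbf{r}'$: the two summatory functions are intertwined under the base-$2$ substitution, so one is forced to track both simultaneously. (3) Use these to derive recurrences expressing $M(2n)$, $M(2n+1)$, $m(2n)$, $m(2n+1)$ in terms of finitely many quantities indexed by $n$ (the $M,m$ for $\mathbf{r}$ and for $\mathbf{r}'$, possibly together with a few boundary correction terms). (4) Conclude that $\rho(2n), \rho(2n+1)$ are $\mathbb{Z}$-linear combinations of the values at $n$ of a fixed finite list of sequences, which closes the $2$-kernel module and yields $2$-regularity; the claim that $\mathbf{r}$ and $\mathbf{r}'$ have the same abelian complexity falls out because the recurrence system is symmetric in the two sequences (and they agree on small $n$).

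**Main obstacle.** The routine parts are the algebraic manipulations of the summatory functions; the genuinely delicate point is step (3) --- getting a \emph{closed} finite system of recurrences for the extremal functions $M(\cdot), m(\cdot)$. The maximum of $S(t+n) - S(t)$ over all $t$ does not split cleanly according to the parities of $t$ and $n$ because the optimal $t$ for argument $2n$ need not be even, and the ``staircase'' of partial sums of Rudin--Shapiro has subtle cancellation (its growth is $\Theta(\sqrt{n})$ with oscillation governed by the $4$-adic expansion of $n$). So I expect the heart of the proof to be a careful case analysis on $n \bmod 4$ (or $\bmod\ 8$), splitting the window $[t,t+n)$ at the nearest even boundaries, and bounding the resulting boundary terms by constants, so that the recursion for $M,m$ only involves $M,m$ at $\lfloor n/2\rfloor$ and $\lceil n/2\rceil$ plus bounded corrections. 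Once that finite system is in hand --- say of the schematic form $\rho(4n+j) = a_j\,\rho(n) + b_j\,\rho(n+1) + c_j$ for constants $a_j,b_j,c_j$ and $j\in\{0,1,2,3\}$, valid for $n$ large --- $2$-regularity is immediate, and these same recurrences are what the second half of the paper will feed into the box-dimension computation for $\lambda(x)$.
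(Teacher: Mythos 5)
Your reduction of $\rho(n)$ to extremal window sums and your guess at the final shape of the recurrences ($\rho(4n+j)$ as an affine combination of $\rho(n)$ and $\rho(n+1)$) do match the paper (Proposition \ref{lem:aMm} and Theorem \ref{thm:abelcomp}). Note, though, that the paper simplifies the bookkeeping in two ways you do not anticipate: a symmetry lemma $M_{\mathbf{a}}(n)+m_{\mathbf{a}}(n)=0$ (proved via the letter exchange $\mu:a\leftrightarrow d$, $b\leftrightarrow c$, which commutes with $\sigma$), so that only the maximum $M(n)$ needs to be tracked; and a direct base-$4$ analysis, where $r(4n)=r(4n+1)=r(n)$ and $r(4n+2)=-r(4n+3)=(-1)^{n}r(n)$ make the window-sum identities (Lemma \ref{lem:sum}) close on $\Sigma_{\mathbf{r}}$ alone, with no need to couple in the summatory function of $\mathbf{r}^{\prime}$ as your step (2) suggests; $\mathbf{r}^{\prime}$ enters only because its abelian complexity is itself part of the statement.

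The genuine gap is your step (3), which you flag as the delicate point but then leave at the level of an expectation: ``bounding the resulting boundary terms by constants'' cannot deliver $2$-regularity, because regularity requires exact linear recurrences, not inequalities with bounded corrections. The splitting argument does yield the exact upper bounds $M(4n)\leq 2M(n)+2$, $M(4n+1)\leq 2M(n)+1$, $M(4n+2)\leq M(n)+M(n+1)+1$, $M(4n+3)\leq 2M(n+1)+1$ (Lemma \ref{lem:upperbound}), but the heart of the proof is showing these bounds are \emph{attained}, and that is not a matter of estimation: one must exhibit, for every $n$, actual factors of $\mathbf{r}$ (not arbitrary $\pm 1$ words) realizing the extremal sums. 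The paper does this by an inductive construction of explicit words $W_{n}$ over $\{a,b,c,d\}$, built from $\sigma^{2}(W_{\lfloor n/4\rfloor})$ with prefixes and suffixes adjusted according to whether $\Delta M(n)=1$ or $-1$, where the induction must carry auxiliary hypotheses (which letters can precede $W_{n}$ inside $\mathbf{s}$, and which letter $W_{n}$ ends with) precisely so that the boundary letters contribute their maximal possible values; a parallel construction $\widetilde{W}_{n}$ handles $\mathbf{r}^{\prime}$, and only then does the equality $\rho_{\mathbf{r}}=\rho_{\mathbf{r}^{\prime}}$ follow, rather than ``falling out by symmetry'' as you assert. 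Your proposal contains no mechanism for this attainability step, and without it neither the recurrence system nor the $2$-regularity is established.
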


\begin{figure}[htbp]\label{fig:lambda}
\centering
\includegraphics[scale=.4]{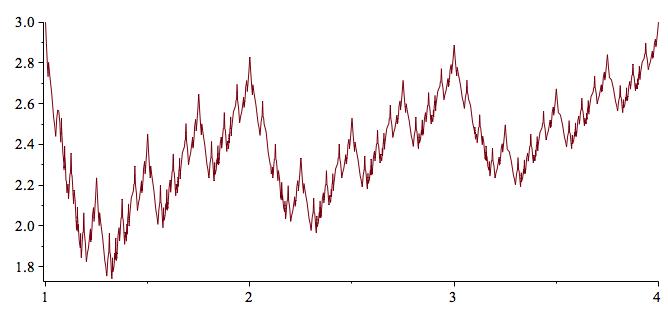}
\caption{The graph of $\lambda(x)$ for $x\in [1,4]$.}
\end{figure}

In the second part, in sprite by the work of Brillhart, Erd\H{o}s and Morton  \cite{BEM}, we  study the limit function \[\lambda(x):=\lim_{k\to\infty}\frac{\rho(4^{k}x)}{\sqrt{4^{k}x}}\] where $\rho(x):=\rho(\lfloor x\rfloor)$ for any $x> 0$. The function $\lambda$ is continuous and non-differentiable almost everywhere, for detail see \cite{CLWW}. Further, $\lambda(x)$ is self-similar in the sense that $\lambda(x)=\lambda(4x)$ for any $x>0$. The graph of $\lambda(x)$ on $[1,4]$, which is illustrated in figure \ref{fig:lambda}, has potential to be a fractal curve; and it is.  In fact, we prove the following result.

\begin{result2}
The box dimension of the graph of $\lambda(x)$ on any sub-interval of $(0,+\infty)$ is $3/2$.
\end{result2}

A variety of interesting fractals, both of theoretical and practical importance, occur as graphs of functions. Yue proved in \cite{Yue95} that the graph of one limit function studied in \cite{BEM} also has box dimension $3/2$. With a full probability, one dimensional Brownian sample function has Hausdorff dimension and box dimension $3/2$, see \cite[Theorem 16.4]{F04}. For any $b\geq 2$, the graph of Weierstarss function $W(x)=\sum_{n=0}^{\infty} b^{-n/2}\cos (b^{n}x)$ has Hausdorff dimension and box dimension $3/2$, see for example \cite{F04,Shen15} and references therein. For the Hausdorff dimension of the graph of $\lambda(x)$, Theorem B poses a good candidate $3/2$. It is natural to conjecture that the Hausdorff dimensions of the graphs of $\lambda(x)$ equals $3/2$.

The outline of this paper is as follows. In Section 2, we state basic definitions and notation. In Section 3, we give the recurrence relations
of the abelian complexity function of the Rudin-Shapiro sequence $\mathbf{r}$ and $\mathbf{r^{\prime}}$.  As a consequence, the
abelian complexity function of the Rudin-Shapiro sequence is $2$-regular, and the first difference of the abelian complexity function of the Rudin-Shapiro sequence is $2$-automatic. In the last section the box dimension of the graph of the function $\lambda(x)$ is studied.

%\subsection{}
\section{Preliminary}
In this section, we will introduce some notation and give the definitions of the abelian complexity function and the Rudin-Shapiro sequence.
\subsection{Finite and infinite words}
An \emph{alphabet} $\mathcal{A}$ is a finite and non-empty set (of symbols) whose elements are called \emph{letters}. A (finite)
\emph{word} over the  alphabet $\mathcal{A}$ is a concatenation of letters in $\mathcal{A}$. The concatenation of two words ${u} =
u(0)u(1) \cdots u(m)$ and ${v} = v(0)v(1) \cdots v(n)$ is the word ${uv} = u(0)u(1) \cdots u(m)v(0)v(1) \cdots v(n)$. The set of all finite
words over $\mathcal{A}$ including the \emph{empty word} $\varepsilon $ is denoted by $\mathcal{A}^*$. An infinite word $\mathbf{w}$ is an
infinite sequence of letters in $\mathcal{A}$. The set of all infinite words over $\mathcal{A}$ is denoted by $\mathcal{A}^{\mathbb{N}}$.

The \emph{length} of a finite word ${w}\in \mathcal{A^*}$, denoted by $|w|$, is the number of letters contained in $w$. We set $\left|
\varepsilon  \right| = 0$. For any word $u\in\mathcal{A}^{*}$ and any letter $a \in \mathcal{A}$, denote by $|{u}|_a$ the number of occurrences
of $a$ in ${u}$.

A word $w$ is a factor of a finite (or an infinite) word $v$, written by $w\prec v$ if there exist a finite word $x$ and a finite (or an
infinite) word $y$ such that $v=xwy$. When $x=\varepsilon$,  ${w}$ is called a \emph{prefix} of ${v}$, denoted by ${w} \triangleleft {v}$; when
$y=\varepsilon$, $ w$ is called a suffix of ${v}$, denoted by ${w} \triangleright {v}$.

\subsection{Digit sums}
Now we assume that the alphabet $\mathcal{A}$ is composed of integers. Let $\mathbf{w}=w(0)w(1)w(2)\cdots \in \mathcal{A}^{\mathbb{N}}$ be an
infinite word. For any $i\geq 0$ and $n\geq 1$, the sum of consecutive $n$ letters in $\mathbf{w}$ starting from the position $i$ is denoted by
\[\Sigma_{\mathbf{w}}(i,n):=\sum_{j=i}^{i+n-1} w(j).\]
The \emph{maximal sum} and \emph{minimal sum} of consecutive $n~(n\geq 1)$ letters in $\mathbf{w}$ are denoted by
\[ M_{\mathbf{w}}(n):=\max_{i\geq 0}\Sigma_{\mathbf{w}}(i,n) \textrm{ and } %
 m_{\mathbf{w}}(n):=\min_{i\geq 0}\Sigma_{\mathbf{w}}(i,n).\]
In addition, we always assume that $M_{\mathbf{w}}(0)=m_{\mathbf{w}}(0)=0$.

Denote the digit sum of a finite word ${u}=u(0)\cdots u(|{u}|-1)\in\mathcal{A}^{*}$ by
\[\mathrm{DS}(u):= \sum_{j=0}^{|{u}|-1} u(j),\]
then \[M_{\mathbf{w}}(n)=\max\big\{\mathrm{DS}(v):v\in\mathcal{F}_{\mathbf{w}}(n)\big\}\] and
\[m_{\mathbf{w}}(n)=\min\big\{\mathrm{DS}(v):v\in\mathcal{F}_{\mathbf{w}}(n)\big\}.\]

\medskip
The abelian complexity function of an infinite word $\mathbf{w}$ over $\{-1,1\}$ is closely related to the digit sums of factors of $\mathbf{w}$.
\begin{proposition}\label{lem:aMm}%[\cite{BBT11}]
Let $\mathbf{w}\in\{-1,1\}^{\mathbb{N}}$. Then \[\rho_{\mathbf{w}}(n)=\frac{M_{\mathbf{w}}(n)-m_{\mathbf{w}}(n)}{2}+1.\]
\end{proposition}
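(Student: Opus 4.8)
The plan is to show that for a word $\mathbf{w}$ over $\{-1,1\}$, abelian equivalence classes of factors of length $n$ are in bijection with the possible values of the digit sum $\mathrm{DS}(v)$ over $v\in\mathcal{F}_{\mathbf{w}}(n)$, and that these values form an arithmetic progression with common difference $2$ running from $m_{\mathbf{w}}(n)$ to $M_{\mathbf{w}}(n)$. First I would observe that since the alphabet is $\{-1,1\}$, a factor $v$ of length $n$ is determined up to abelian equivalence by the single number $|v|_1$, since $|v|_{-1}=n-|v|_1$; and $\mathrm{DS}(v)=|v|_1-|v|_{-1}=2|v|_1-n$ is an affine function of $|v|_1$. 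Hence $v\sim_{ab}v'$ if and only if $\mathrm{DS}(v)=\mathrm{DS}(v')$, so $\rho_{\mathbf{w}}(n)$ equals the cardinality of the set $S_n:=\{\mathrm{DS}(v):v\in\mathcal{F}_{\mathbf{w}}(n)\}$.

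Next I would pin down the structure of $S_n$. Every element of $S_n$ has the form $2k-n$ with $0\le k\le n$, so $S_n$ is a subset of the arithmetic progression $\{-n,-n+2,\dots,n-2,n\}$ of step $2$, and its minimum and maximum are $m_{\mathbf{w}}(n)$ and $M_{\mathbf{w}}(n)$ by the displayed characterizations of $M_{\mathbf{w}},m_{\mathbf{w}}$ in terms of $\mathrm{DS}$. It remains to check that $S_n$ contains every value of this progression between its extremes, i.e. that $S_n$ has no gaps. This is the one genuinely nontrivial step, and it follows from a discrete intermediate value argument: fix positions $i,j\ge 0$ with $\Sigma_{\mathbf{w}}(i,n)=m_{\mathbf{w}}(n)$ and $\Sigma_{\mathbf{w}}(j,n)=M_{\mathbf{w}}(n)$, and consider the finite sequence of factors of length $n$ obtained by sliding the window one position at a time from starting position $i$ to starting position $j$ (say $i<j$; the other case is symmetric). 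Passing from the window at position $p$ to the window at position $p+1$ changes the digit sum by $w(p+n)-w(p)$, which lies in $\{-2,0,2\}$ because each letter is $\pm1$. Thus consecutive terms of this sequence of digit sums differ by at most $2$; since the sequence starts at $m_{\mathbf{w}}(n)$, ends at $M_{\mathbf{w}}(n)$, and every term is congruent to $n$ modulo $2$, it must take every value in $\{m_{\mathbf{w}}(n),m_{\mathbf{w}}(n)+2,\dots,M_{\mathbf{w}}(n)\}$ along the way.

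Combining these observations, $S_n$ is exactly the arithmetic progression $\{m_{\mathbf{w}}(n),m_{\mathbf{w}}(n)+2,\dots,M_{\mathbf{w}}(n)\}$, whose cardinality is
\[
\#S_n=\frac{M_{\mathbf{w}}(n)-m_{\mathbf{w}}(n)}{2}+1,
\]
which equals $\rho_{\mathbf{w}}(n)$ by the first step. (The case $n=0$ is covered by the convention $M_{\mathbf{w}}(0)=m_{\mathbf{w}}(0)=0$, giving $\rho_{\mathbf{w}}(0)=1$, as it should since the only factor of length $0$ is the empty word.) The main obstacle is the no-gaps claim, but as indicated it is handled cleanly by the sliding-window increment being bounded by $2$ in absolute value; everything else is bookkeeping with the identity $\mathrm{DS}(v)=2|v|_1-n$.
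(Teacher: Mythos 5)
Your proof is correct. The paper itself does not prove this proposition but simply refers to \cite{BBT11} (Proposition 2.2), and your argument --- reducing abelian classes over $\{-1,1\}$ to digit-sum values and then showing the set of digit sums has no gaps via the sliding-window step $w(p+n)-w(p)\in\{-2,0,2\}$ together with the parity constraint --- is exactly the standard proof underlying that citation, so it matches the intended approach.
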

\begin{proof}For a proof one can refer to \cite[Proposition 2.2]{BBT11}.
\end{proof}

\subsection{The Rudin-Shapiro sequence $\mathbf{r}$ and a related sequence $\mathbf{r^{\prime}}$}
The Rudin-Shapiro sequence \[\mathbf{r}=r(0)r(1)\cdots r(n)\cdots \in {\{-1,1\}}^{\mathbb{N}}\] is given the following recurrence relations:
\begin{equation}\label{eq:recurrent}
 r(0)=1, ~r(2n)=r(n),~r(2n+1)=(-1)^nr(n) \quad (n\geq 0).
\end{equation}
The generating function $R(z)=\sum_{n\geq 0}r(n)z^{n}$ of the Rudin-Shapiro sequence satisfies the following Mahler type functional
equation
\[R(z)+R(-z)=2R(z^{2}).\]
We also study the coefficient sequence of $R(-z)$, denoted by \[\mathbf{r^{\prime}}=r^{\prime}(0)r^{\prime}(1)\cdots\in\{-1,1\}^{\mathbb{N}}.\]
Apparently, $r^{\prime}(n)=(-1)^{n}r(n)$ for all $n\geq 0$. Thus
\begin{equation}
r^{\prime}(0)=1,~r^{\prime}(2n)=(-1)^{n}r^{\prime}(n),~r^{\prime}(2n+1)=-r^{\prime}(n)\quad (n\geq 0).
\end{equation}

The Rudin-Shapiro sequence can also be generated by a substitution in the following way. Let  $\sigma:\{a,b,c,d\}\to\{a,b,c,d\}^*$ and $\tau,
\tau^{\prime}:\{a,b,c,d\}\to \{-1,1\}^{*}$ where
 \begin{equation*}
 \begin{array}{rllll}
 \sigma:& a\mapsto ab,& b\mapsto ac ,& c\mapsto db,& d\mapsto dc,\\
\tau: & a\mapsto 1, & b\mapsto 1, & c\mapsto -1,& d\mapsto -1,\\
\tau^{\prime}: & a\mapsto 1, & b\mapsto -1, & c\mapsto 1,& d\mapsto -1.
 \end{array}
 \end{equation*}
Let $\mathbf{s} := \sigma^{\infty}(a)$ be the fix point of $\sigma$ leading by $a$. Then  \[\mathbf{r} = \tau(\sigma^{\infty}(a)) \text{ and }
\mathbf{r^{\prime}}= \tau^{\prime}(\sigma^{\infty}(a)).\]

Denote by $\mathcal{M}_{\mathbf{s}}(n)$ (and $\mathcal{M}^{\prime}_{\mathbf{s}}(n)$) the set of all the factors of length $n$ in $\mathbf{s}$
such that the sum of letters of such factor under coding $\tau$ (and $\tau^{\prime}$, respectively) attains the maximal value, i.e.,
\begin{align*}
\mathcal{M}_{\mathbf{s}}(n) &:= \{ {u \in \mathcal{F}_{\mathbf{s}}(n)~:~ S(u) = M_{\mathbf{r}}(n)}\},\\
\mathcal{M}^{\prime}_{\mathbf{s}}(n) &:= \{ {u \in \mathcal{F}_{\mathbf{s}}(n)~:~ S^{\prime}(u) = M_{\mathbf{r^{\prime}}}(n)}\}
\end{align*}
where $S:= \text{DS}\circ\tau$  and $S^{\prime}:= \text{DS}\circ\tau^{\prime}$.

\section{The Regularity of the abelian Complexity of $\mathbf{r}$ and $\mathbf{r^{\prime}}$}
In this section, we will discuss the regularity of the abelian complexity function of the Rudin-Shapiro sequence $\mathbf{r}$ and the sequence
$\mathbf{r^{\prime}}$. From now on, unless otherwise stated, we always set $\mathcal{A} = \{-1,1\}.$

\subsection{Statement of results}
\begin{theorem} \label{thm:abelcomp}
For any $n\geq 1$, \[M_{\mathbf{r}}(n)=M_{\mathbf{r^{\prime}}}(n)=:M(n).\]
Moreover, $M(1)=1$, $M(2)=2$, $M(3)=3$ and for $n\geq 1,$
\begin{align*}
M(4n)&= 2M(n)+2, & M(4n+1)&=2M(n)+1,\\
M(4n+2)&=M(n)+M(n+1)+1,& M(4n+3)&=2M(n+1)+1.
\end{align*}
\end{theorem}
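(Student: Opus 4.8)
The plan is to establish the master identity $M_{\mathbf r}(n)=M_{\mathbf r'}(n)$ together with the recurrences by working entirely at the level of the substitution $\sigma$ and the letter-to-digit codings $\tau,\tau'$, exploiting the self-similar block structure of $\mathbf s=\sigma^\infty(a)$. The starting point is the observation that $\sigma$ has constant length $2$, so every factor of length roughly $2n$ in $\mathbf s$ decomposes as (a possibly empty suffix of a $\sigma$-image of a letter)$\,\cdot\,\sigma(w)\,\cdot\,$(a possibly empty prefix of a $\sigma$-image), where $w$ is a factor of $\mathbf s$ of length about $n$. Since $\sigma$ maps each letter to a two-letter block and both codings are additive, the digit sum $S(\sigma(w))=\mathrm{DS}(\tau\sigma(w))$ is itself an additive function of the letters of $w$; one computes $\tau\sigma(a)=\tau(ab)=1{+}1=2$, $\tau\sigma(b)=\tau(ac)=0$, $\tau\sigma(c)=\tau(db)=0$, $\tau\sigma(d)=\tau(dc)=-2$, so $S(\sigma(w))=2\bigl(|w|_a-|w|_d\bigr)$. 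The key algebraic point is that this weight function $a\mapsto 1,b\mapsto 0,c\mapsto 0,d\mapsto -1$ is, up to an affine change, exactly recovered by a \emph{single} application of one of the original codings: indeed $\tau(a)=\tau(b)=1$ and $\tau(c)=\tau(d)=-1$, while $\tau'(a)=\tau'(c)=1$ and $\tau'(b)=\tau'(d)=-1$, so $\tfrac12 S(\sigma(w))=\tfrac12\bigl(S(w)+S'(w)\bigr)$. This is the identity that forces $\mathbf r$ and $\mathbf r'$ to have the same maximal-sum function and simultaneously produces the factor $2$ and the coupling between $M(n)$ and $M(n+1)$ in the recurrences.

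Concretely, I would first prove $M_{\mathbf r}(n)=M_{\mathbf r'}(n)$ by a symmetry argument: there is a length-preserving bijection on factors of $\mathbf s$ that swaps the roles of $\tau$ and $\tau'$ — for instance the map induced by the letter permutation $b\leftrightarrow c$ together with a mirroring, which one checks commutes suitably with $\sigma$ — so that $\{S(u):u\in\mathcal F_{\mathbf s}(n)\}$ and $\{S'(u):u\in\mathcal F_{\mathbf s}(n)\}$ coincide as multisets; taking maxima gives $M_{\mathbf r}(n)=M_{\mathbf r'}(n)$. (If the naive permutation does not commute with $\sigma$ on the nose, one instead verifies it at the level of the three admissible length-$2$ blocks and propagates by induction, or one defines $M'$ via $\mathcal M'_{\mathbf s}$ as in the excerpt and shows both equal the common value $M$.) Having identified the two functions, I write $M(n)$ and work only with $\tau$.

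Next comes the recursion proper. Fix $n\ge1$ and consider a factor $u$ of $\mathbf s$ of length $4n+j$ for $j\in\{0,1,2,3\}$; write $u$ according to the position of its left endpoint modulo $2$ (and then modulo $4$, since $\mathbf s$ is also the fixed point of $\sigma^2$, a length-$4$ substitution). Peeling off at most one incomplete block on each side, $u=p\,\sigma(w)\,q$ with $w\in\mathcal F_{\mathbf s}(m)$ for $m$ equal to $2n$, $2n$, $2n{+}1$, $2n{+}1$ in the four residue classes (the bookkeeping of which $m$ occurs, and how many of the boundary letters $p,q$ contribute, is exactly what produces the four different cases and the additive constants $+2,+1,+1,+1$). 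Then $S(u)=S(p)+S(\sigma(w))+S(q)=S(p)+S(q)+S(w)+S'(w)$. Maximising over $u$: the boundary terms $S(p),S(q)$ are bounded by small explicit constants and one must check that those extreme boundary choices are compatible with a choice of interior $w$ that is simultaneously nearly optimal for both $S$ and $S'$ — here is where one invokes $M_{\mathbf r}=M_{\mathbf r'}=M$ and a structural lemma saying that the maximising sets $\mathcal M_{\mathbf s}(m)$ and $\mathcal M'_{\mathbf s}(m)$ can be chosen to interact correctly with the prescribed prefixes/suffixes. Iterating the length-$2$ substitution once more turns $S(w)+S'(w)$ for a length-$2m$ window into $2M(m)$-type expressions, and splitting $w$ of odd length $2n{+}1$ as an image plus one extra letter is what creates the mixed term $M(n)+M(n+1)$ in the $4n{+}2$ case and the $M(n+1)$ in the $4n{+}3$ case. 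The base values $M(1)=1,M(2)=2,M(3)=3$ are read off directly from the prefix of $\mathbf r$.

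The main obstacle is the compatibility issue in the maximisation step: it is not enough to bound $S(u)\le (\text{boundary constant})+M(m)+M(m)$; one must exhibit a single factor achieving the claimed value, which requires showing that an optimal interior block for $\tau$ is \emph{also} optimal for $\tau'$ (so that $S(w)+S'(w)$ reaches $2M(m)$) \emph{and} that it can be flanked by the specific boundary letters needed to gain the constants $+2$ or $+1$. I expect this to need a finite-state analysis of the structure of $\mathcal M_{\mathbf s}(m)$ and $\mathcal M'_{\mathbf s}(m)$ — typically a lemma characterising these sets (e.g. showing they always contain a factor that begins and ends with prescribed letters, or that $\mathcal M_{\mathbf s}(m)=\mathcal M'_{\mathbf s}(m)$ after the $b\leftrightarrow c$ symmetry) — together with matching upper and lower bounds treated separately: the upper bound $M(4n{+}j)\le\cdots$ from the decomposition above, and the lower bound by explicitly constructing the witnessing factor from witnesses at level $n$. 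Everything else is careful but routine induction on $n$, seeded by checking small cases against \eqref{eq:recurrent}.
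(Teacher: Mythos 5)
There is a genuine gap, and it sits exactly where you located the ``main obstacle.'' Your scheme needs a factor $w$ of length $m\approx 2n$ with $S(w)+S'(w)=2M(m)$, i.e.\ a factor that simultaneously maximizes $S$ and $S'$. No such factor exists in general: your own identity $S(w)+S'(w)=2\bigl(|w|_a-|w|_d\bigr)$ shows the left side is maximized by a different quantity. Already for $m=2$ the only word with $|w|_a-|w|_d=2$ is $aa$, which is not a factor of $\mathbf{s}$, so $\max_w\bigl(S(w)+S'(w)\bigr)=2$ while $2M(2)=4$; the discrepancy persists at all scales. Relatedly, the one-level upper bound your decomposition produces, $S(u)\le S(p)+S(q)+M(2n)+M(2n)$, has the wrong shape: it involves $M(2n)$ rather than $M(n),M(n+1)$, and is strictly weaker than the claimed recurrences (e.g.\ it cannot give $M(8)=2M(2)+2=6$). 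The fix is to renormalize twice, i.e.\ to work with $\sigma^{2}$ (a length-$4$ substitution) and the identity $S(\sigma^{2}(w))=2S(w)$ of Lemma \ref{lem:iterate}; then $S(u)=S(p)+2S(w')+S(q)$ with $|w'|\approx n$ and no simultaneous-optimality condition ever arises. This is what the paper does, in arithmetic form, via the sixteen partial-sum identities of Lemma \ref{lem:sum} and the resulting upper bounds of Lemma \ref{lem:upperbound}. Your side claim used for $M_{\mathbf{r}}=M_{\mathbf{r'}}$ is also false as stated: the language of $\mathbf{s}$ is not closed under reversal composed with $b\leftrightarrow c$ (e.g.\ $baba\prec\mathbf{s}$ but $acac\not\prec\mathbf{s}$, since $a$ occurs only at even and $c$ only at odd positions and the recurrences for $r$ forbid the pattern), so that symmetry argument needs replacing; the paper gets the equality for free because both $M_{\mathbf{r}}$ and $M_{\mathbf{r'}}$ are shown to satisfy the same recurrences with the same initial values.

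Even after the upper bound is repaired, the substantive half of the theorem — exhibiting factors that attain the bounds — is left in your sketch as an unspecified ``finite-state analysis.'' This is where essentially all the work lies in the paper: one defines explicit words $W_n$ (and $\widetilde{W}_n$ for $\mathbf{r'}$) recursively through $\sigma^{2}$, as in \eqref{eq:maxW} and \eqref{eq:maxW2}, and proves by induction three statements at once: that $W_n$ extends to the left by $b$ or $d$ inside $\mathbf{s}$, that $W_n$ ends in $a$ or $c$, and that $W_n$ is maximizing (Lemmas \ref{lem:maxfactor} and \ref{lem:maxfactor2}). Crucially, the construction of the witness of length $4n+2$ must branch on the sign of $\Delta M(n)$, and the induction has to carry exactly the boundary information (left-extendability and last letter) needed to glue the $\pm$ corrections of $+2,+1,+1,+1$ onto $\sigma^{2}(W_n)$ or $\sigma^{2}(W_{n+1})$. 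Nothing in your proposal anticipates this case split or supplies the extension properties, so as written the plan does not yet yield the recurrences; it would have to be rebuilt along the lines just described.
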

\begin{corollary}\label{cor:abelcomp}
The sequence $(M(n))_{\cc{n\geq 0}}$ is $2$-regular.
\end{corollary}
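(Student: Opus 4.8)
The plan is to produce, by hand, an explicit finitely generated $\mathbb{Z}$-module that contains the $4$-kernel of $(M(n))_{n\geq 0}$; since the recurrences of Theorem~\ref{thm:abelcomp} are organised modulo $4$ this is the right base to work in, and a sequence is $2$-regular as soon as it is $4$-regular \cite{ALL}. Guided by the fact that those recurrences involve not only $M(n)$ but also the shifted value $M(n+1)$, I would take as candidate generators the four sequences $\mathbf{1}=(1)_{n\geq 0}$, $\mathbf{e}=(\delta_{n,0})_{n\geq 0}$, $\mathbf{M}=(M(n))_{n\geq 0}$ and $\mathbf{M}^{+}=(M(n+1))_{n\geq 0}$, and set $\mathcal{V}:=\mathbb{Z}\mathbf{1}+\mathbb{Z}\mathbf{e}+\mathbb{Z}\mathbf{M}+\mathbb{Z}\mathbf{M}^{+}$.

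The core of the argument is to check that $\mathcal{V}$ is stable under the four sectioning operators $\Lambda_{r}\colon (a(n))_{n\geq 0}\mapsto (a(4n+r))_{n\geq 0}$ for $r\in\{0,1,2,3\}$. For $\mathbf{1}$ and $\mathbf{e}$ this is immediate ($\Lambda_{r}\mathbf{1}=\mathbf{1}$, $\Lambda_{0}\mathbf{e}=\mathbf{e}$, $\Lambda_{r}\mathbf{e}=\mathbf{0}$ for $r\geq 1$). For $\mathbf{M}$ and $\mathbf{M}^{+}$ one simply reads the identities off Theorem~\ref{thm:abelcomp}: it gives $\Lambda_{0}\mathbf{M}=2\mathbf{M}+2\mathbf{1}-2\mathbf{e}$, $\Lambda_{1}\mathbf{M}=2\mathbf{M}+\mathbf{1}$, $\Lambda_{2}\mathbf{M}=\mathbf{M}+\mathbf{M}^{+}+\mathbf{1}$, $\Lambda_{3}\mathbf{M}=2\mathbf{M}^{+}+\mathbf{1}$, and likewise $\Lambda_{0}\mathbf{M}^{+}=2\mathbf{M}+\mathbf{1}$, $\Lambda_{1}\mathbf{M}^{+}=\mathbf{M}+\mathbf{M}^{+}+\mathbf{1}$, $\Lambda_{2}\mathbf{M}^{+}=2\mathbf{M}^{+}+\mathbf{1}$, while $\Lambda_{3}\mathbf{M}^{+}=(M(4n+4))_{n\geq 0}=(M(4(n+1)))_{n\geq 0}$, which by reapplying the rule $M(4m)=2M(m)+2$ equals $2\mathbf{M}^{+}+2\mathbf{1}$. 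The one subtlety is that Theorem~\ref{thm:abelcomp} asserts its relations only for $n\geq 1$; checking the cases $n=0$ against $M(0)=0$, $M(1)=1$, $M(2)=2$, $M(3)=3$ shows that all of them in fact hold for every $n\geq 0$ except the one feeding $\Lambda_{0}\mathbf{M}$, where $M(0)=0$ while $2M(0)+2=2$, and this single lapse is exactly what the correction term $-2\mathbf{e}$ absorbs. Hence $\mathcal{V}$ is $\Lambda_{r}$-stable.

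Once $\mathcal{V}$ is known to be $\Lambda_{r}$-stable and to contain $\mathbf{M}$, an induction on $e$ shows that every sequence $(M(4^{e}n+c))_{n\geq 0}$, $0\leq c<4^{e}$, lies in $\mathcal{V}$; so the $\mathbb{Z}$-module generated by the $4$-kernel of $(M(n))_{n\geq 0}$ is a submodule of the finitely generated module $\mathcal{V}$, hence is itself finitely generated (as $\mathbb{Z}$ is Noetherian). Therefore $(M(n))_{n\geq 0}$ is $4$-regular and thus $2$-regular. If one prefers to avoid the equivalence of $2$- and $4$-regularity, the same computation shows directly that the $2$-kernel is contained in the finitely generated module spanned by $\mathbf{1},\mathbf{e},\mathbf{M},\mathbf{M}^{+},(M(2n))_{n\geq 0},(M(2n+1))_{n\geq 0},(M(2n+2))_{n\geq 0}$.

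The calculations are completely mechanical, so the obstacle is organisational rather than analytic. The point that needs care is structural: because $M(n+1)$ occurs on the right-hand sides of the recurrences, the shifted sequence $\mathbf{M}^{+}$ must be carried along in the generating set, and one must confirm that its sections close up inside $\mathcal{V}$ — in particular that $\Lambda_{3}\mathbf{M}^{+}$, which yields $M(4n+4)$, folds back in via the $M(4m)$-relation rather than producing a genuinely new sequence. The only other thing to watch is the single boundary mismatch at $n=0$, which is why the finitely supported sequence $\mathbf{e}$ is thrown in (equivalently, one may cite that modifying a $2$-regular sequence at finitely many positions leaves it $2$-regular).
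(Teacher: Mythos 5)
Your argument is correct and is essentially the paper's proof written out in full: the paper simply cites Theorem~\ref{thm:abelcomp} together with the kernel characterization of regularity in \cite[Theorem 16.1.3 (e)]{ALL} and the equivalence of $2$- and $4$-regularity from \cite[Theorem 2.9]{AS92}, which is exactly the route you take. Your explicit stability computation for the module generated by $\mathbf{1},\mathbf{e},\mathbf{M},\mathbf{M}^{+}$ (including the $n=0$ correction and the closure of $\Lambda_{3}\mathbf{M}^{+}$) just makes those citations concrete.
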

\begin{proof}
The result follows from Theorem \ref{thm:abelcomp} , \cite[Theorem 16.1.3 (e)]{ALL} and \cite[Theorem 2.9]{AS92}.
\end{proof}

For all $n\geq 0$, let \[\Delta M(n):= M(n+1)-M(n).\] The difference sequence $(\Delta M(n))_{n\geq 0}$ is characterized by the
following result.
\begin{corollary}\label{cor:abeldiff}
$\Delta M(i)=1$ for $0\leq i\leq 3$, and for $n \geq 1$,
\begin{equation}\label{eq:delta}
\left\{
\begin{array}{ccccl}
\Delta M(4n)  &=&-\Delta M(4n+3)&=&-1,\\
\Delta M(4n+1)&=&\Delta M(4n+2)&=&\Delta M(n).
\end{array}
\right.
\end{equation}
Moreover,  $(\Delta M(n))_{n\geq 0}$ is a $2$-automatic sequence.
\end{corollary}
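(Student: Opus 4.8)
Both assertions are essentially bookkeeping consequences of Theorem~\ref{thm:abelcomp}, so the plan is direct. For the recurrence relations \eqref{eq:delta} I would subtract consecutive values of $M$ using the four formulas of Theorem~\ref{thm:abelcomp}: for $n\geq 1$,
\[
\Delta M(4n) = (2M(n)+1)-(2M(n)+2) = -1 ,
\qquad
\Delta M(4n+3) = (2M(n+1)+2)-(2M(n+1)+1) = 1 ,
\]
where in the second identity I have used $M(4n+4)=M(4(n+1))=2M(n+1)+2$; this gives the first line of \eqref{eq:delta}, while the two remaining cases both collapse after cancellation to $\Delta M(4n+1)=\Delta M(4n+2)=M(n+1)-M(n)=\Delta M(n)$, the second line. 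For the base values I would use the convention $M(0)=0$ together with $M(1)=1$, $M(2)=2$, $M(3)=3$ and $M(4)=2M(1)+2=4$, giving $\Delta M(i)=1$ for $0\leq i\leq 3$. The only point needing attention here is to apply each formula of Theorem~\ref{thm:abelcomp} strictly in its stated range $n\geq 1$ and to treat $i=0,1,2,3$ separately.

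For the $2$-automaticity I would exhibit a finite $4$-kernel for $\mathbf{d}:=(\Delta M(n))_{n\geq 0}$; since $4$ is a power of $2$, a sequence is $2$-automatic if and only if it is $4$-automatic (see \cite{ALL}), so this is enough. Write $d(n)=\Delta M(n)$. The relations just obtained say that $d(4n)=-1$, $d(4n+3)=1$ and $d(4n+1)=d(4n+2)=d(n)$ for all $n\geq 1$, while $d(0)=d(1)=d(2)=d(3)=1$. From this one checks directly that the four subsequences $(d(4n+c))_{n\geq 0}$, $c\in\{0,1,2,3\}$, are, respectively, $(1,-1,-1,-1,\dots)=:\mathbf{u}$ for $c=0$, then $\mathbf{d}$ itself for $c=1$ and $c=2$, and the constant sequence $(1,1,1,\dots)$ for $c=3$. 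Iterating, the subsequences $(u(4n+c))_{n\geq 0}$ of $\mathbf{u}$ are $\mathbf{u}$ (for $c=0$) and the constant sequence $(-1,-1,\dots)$ (for $c=1,2,3$), while the two constant sequences only have themselves as subsequences of this form. Hence the $4$-kernel of $\mathbf{d}$ is the finite set $\{\mathbf{d},\mathbf{u},(1,1,\dots),(-1,-1,\dots)\}$, so $\mathbf{d}$ is $4$-automatic and therefore $2$-automatic.

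I do not expect a genuine obstacle: the whole proof is elementary subtraction followed by a four-element kernel computation. The one place to be careful --- the main technical point --- is the identification of $(d(4n+1))_{n\geq 0}$ and $(d(4n+2))_{n\geq 0}$ with $\mathbf{d}$ itself: for $n\geq 1$ these subsequences have $n$-th term exactly $d(n)$ by the recurrence, but their $0$-th terms are $d(1)$ and $d(2)$ rather than $d(0)$, so the equality with $\mathbf{d}$ holds only because $d(0)=d(1)=d(2)=1$. This small index coincidence should be verified before concluding that the kernel does not grow.
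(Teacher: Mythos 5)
Your proposal is correct, and its first half is exactly what the paper intends: the relations \eqref{eq:delta} and the initial values follow by subtracting consecutive cases of Theorem~\ref{thm:abelcomp} (using $M(0)=0$ and $M(4)=2M(1)+2=4$), a step the paper's printed proof omits entirely. For the automaticity claim your route genuinely differs from the paper's: the published proof consists solely of exhibiting the three-state automaton of Figure~\ref{fig:coro1}, which reads base-$4$ digits and generates $(\Delta M(n))_{n\geq 0}$, with the verification and the passage from $4$-automaticity to $2$-automaticity left implicit; you instead close the sequence $\mathbf{d}=(\Delta M(n))_{n\geq 0}$ under the maps $x(n)\mapsto x(4n+c)$, obtaining the four-element $4$-kernel $\{\mathbf{d},\mathbf{u},(1,1,\dots),(-1,-1,\dots)\}$, and then invoke the standard equivalence of $4$- and $2$-automaticity. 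The two arguments are equivalent in substance (your kernel elements encode the same information as the automaton's states), but yours has the advantage of verifying the paper's own definition of automaticity, namely finiteness of the kernel, directly, and of making explicit the index coincidence $\Delta M(0)=\Delta M(1)=\Delta M(2)=1$ that is needed for $(\Delta M(4n+1))_{n\geq 0}$ and $(\Delta M(4n+2))_{n\geq 0}$ to coincide with $\mathbf{d}$ itself; the paper's figure is more compact but carries no justification. Both approaches correctly establish that $(\Delta M(n))_{n\geq 0}$ is $2$-automatic.
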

\begin{proof}
The difference sequence $(\Delta M(n))_{n\geq 0}$ can be generated by the automaton given in Figure \ref{fig:coro1}.
\begin{figure}[htbp]
\centering
\begin{tikzpicture}[scale=0.8, every node/.style={scale=0.8}, state/.style={scale=0.8, circle solidus,draw,
inner sep=1pt,minimum size=12mm},>=stealth,->,auto,black]
\node[state] (a) {$q_{0}$ \nodepart{lower} $1$};
\node[state] (b) [right=15mm of a] {$q_{1}$ \nodepart{lower} $1$};
\node[state] (c) [right=15mm of b] {$q_{2}$ \nodepart{lower} $-1$};
\node (initial) [left=8mm of a] {Start};
\draw [->] (initial) to (a);
\draw [->] (b) to [out=120, in=60,loop,distance=10mm] node [above] {$1,2,3$} (b);
\draw [->] (a) to [out=120, in=60,loop,distance=10mm] node [above] {$0$} (a);
\draw [->] (a) to  node [above] {$1,2,3$} (b);
\draw [->] (c.200) to [bend left] node [below] {$3$} (b.-20);
\draw [->] (b.20) to [bend left] node [above] {$0$} (c.160);
\draw [->] (c) to [out=120, in=60,loop,distance=10mm] node [above] {$0,1,2$} (c);
\end{tikzpicture}
\caption{The automaton that generates $(\Delta M(n))_{n\geq 0}$.}\label{fig:coro1}
\end{figure}
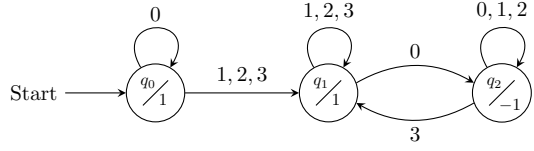
\iffalse
The recurrence relations follow directly from Theorem \ref{thm:abelcomp}. Now we will show that $(\Delta M(n))_{n\geq {\color{green}1}}$ is a
$4$-automatic sequence. Consider the morphism $\theta:\{0,-1,1\}^{\mathbb{N}}\to \{0,-1,1\}^{\mathbb{N}}$ given by
\[0\mapsto 0111, -1\mapsto (-1)(-1)(-1)1, ~~1\mapsto (-1)111.\]
Let $(a_{n})_{n\geq 0}=\theta^{\infty}(0)$. Clearly, $a_{1}=a_{2}=a_{3}=1$ and for any $n\geq 1$,
\[a_{4n}a_{4n+1}a_{4n+2}a_{4n+3}=\theta(a_{n})=(-1)a_{n}a_{n}1,\]
which coincides the recurrence relations \eqref{eq:delta} for any $n\geq 1$. This implies that $a_{n}=M(n)$ for all $n\geq 1$. Note that by
Cobham's theorem \cite{Cob} (see also \cite[Theorem 6.3.2]{ALL}), $(a_{n})_{n\geq 0}$ is a $4$-automatic sequence. Then by \cite[Theorem
5.4.1]{ALL}, $(\Delta M(n))_{n\geq 1}$ is also a $4$-automatic sequence. \lu{Meanwhile, a sequence $\bm{a}=(a_i)_{i\geq 0}$ is
$k$-automatic if and only if it is $k^m$-automatic. Thus $(\Delta M(n))_{n\geq 1}$ is $2$-automatic.}
\fi
\end{proof}

\begin{theorem}\label{regular:abel}
For any $n\geq 1$,  \[\rho_{\mathbf{r}}(n)=\rho_{\mathbf{r^{\prime}}}(n):=\rho(n).\] Moreover, $(\rho(n))_{n\geq 0}$ is $2$-regular.
\end{theorem}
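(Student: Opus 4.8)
The plan is to derive Theorem~\ref{regular:abel} essentially as a corollary of Theorem~\ref{thm:abelcomp} together with Proposition~\ref{lem:aMm}. First I would observe that the equality $M_{\mathbf{r}}(n)=M_{\mathbf{r}'}(n)$ from Theorem~\ref{thm:abelcomp} has a mirror counterpart for the minimal sums. Indeed, since $\mathbf{r}$ takes values in $\{-1,1\}$, the map $w\mapsto -w$ (negating every letter) is a bijection on $\{-1,1\}^{\mathbb{N}}$ that sends $\mathcal{F}_{\mathbf{r}}(n)$ to the factor set of $-\mathbf{r}$ and reverses digit sums, so $m_{\mathbf{w}}(n)=-M_{-\mathbf{w}}(n)$. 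One then needs a structural reason why $M_{-\mathbf{r}}(n)=M_{\mathbf{r}}(n)$ and likewise for $\mathbf{r}'$; this follows from the substitution description in Section~2.3 (the letter-to-letter codings $\tau$ and $\tau'$ together with symmetries of $\sigma$), or alternatively it can be read off directly from the recurrences for $r(n)$ and $r'(n)$ by an easy induction. Consequently $m(n):=m_{\mathbf{r}}(n)=m_{\mathbf{r}'}(n)=-M(n)$.

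Granting that, Proposition~\ref{lem:aMm} gives immediately
\begin{equation*}
\rho_{\mathbf{r}}(n)=\frac{M_{\mathbf{r}}(n)-m_{\mathbf{r}}(n)}{2}+1=M(n)+1=\frac{M_{\mathbf{r}'}(n)-m_{\mathbf{r}'}(n)}{2}+1=\rho_{\mathbf{r}'}(n),
\end{equation*}
so $\rho_{\mathbf{r}}(n)=\rho_{\mathbf{r}'}(n)=:\rho(n)=M(n)+1$ for $n\geq 1$. (The case $n=0$ is trivial, $\rho(0)=1$.) The second assertion then reduces to the $2$-regularity of $(M(n))_{n\geq 0}$, which is exactly Corollary~\ref{cor:abelcomp}. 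To pass from $M$ to $\rho=M+1$ I would invoke the standard closure properties of $k$-regular sequences: the constant sequence $1$ is $2$-regular, the sum of two $2$-regular sequences is $2$-regular (the $\mathbb{Z}$-module generated by the $2$-kernel of $M+1$ is contained in the module generated by the $2$-kernels of $M$ and of the constant sequence together), see \cite[Theorem 16.1.3]{ALL} or \cite[Theorem 2.5]{AS92}. Hence $(\rho(n))_{n\geq 0}$ is $2$-regular.

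Alternatively, and perhaps more cleanly, I would rewrite the recurrences of Theorem~\ref{thm:abelcomp} directly in terms of $\rho$. Substituting $M(n)=\rho(n)-1$ into the four relations gives, for $n\geq 1$,
\begin{align*}
\rho(4n)&=2\rho(n)+1, & \rho(4n+1)&=2\rho(n)-1,\\
\rho(4n+2)&=\rho(n)+\rho(n+1)-1, & \rho(4n+3)&=2\rho(n+1)-1,
\end{align*}
with initial values $\rho(0)=\rho(1)=1$, $\rho(2)=3$, $\rho(3)=4$ (here one should double-check the small cases against the stated $M(1)=1,M(2)=2,M(3)=3$). From these one verifies that the $\mathbb{Z}$-module generated by the $2$-kernel $\mathbf{K}_2(\rho)$ is finitely generated: a natural candidate spanning set is $\{(\rho(n))_n,\ (\rho(n+1))_n,\ (1)_n\}$, and one checks by the recurrences above (splitting indices mod $4$, i.e.\ two steps of the $2$-kernel) that each of $(\rho(4n+j))_n$ and $(\rho(4n+j+1))_n$ for $j=0,1,2,3$ lies in the span of these three generators, with the shift $(\rho(n+1))_n$ again expressible in the same module via its own recurrences. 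This makes the $2$-regularity explicit and self-contained. The main point to be careful about is precisely this bookkeeping: one must confirm that introducing the shifted sequence $(\rho(n+1))_n$ does not force an ever-growing family of generators, i.e.\ that the module really closes up after finitely many generators; this is routine but is the only place where anything needs to be checked rather than quoted.
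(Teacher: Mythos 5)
Your main route is essentially the paper's own proof: the identity $\rho_{\mathbf{a}}(n)=M_{\mathbf{a}}(n)+1$ that you derive from Proposition \ref{lem:aMm} together with the symmetry $m_{\mathbf{a}}(n)=-M_{\mathbf{a}}(n)$ is exactly Lemma \ref{lem:rho} (which rests on Lemma \ref{lem:Mm}, proved there via the letter exchange $\mu: a\leftrightarrow d$, $b\leftrightarrow c$ commuting with $\sigma$ --- the same substitution symmetry you invoke), and the $2$-regularity then follows from Theorem \ref{thm:abelcomp} and Corollary \ref{cor:abelcomp} plus closure of $2$-regular sequences under adding a constant sequence, just as you say.

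One caution about your ``cleaner'' alternative: substituting $M=\rho-1$ into Theorem \ref{thm:abelcomp} gives
\begin{align*}
\rho(4n)&=2\rho(n)+1, & \rho(4n+1)&=2\rho(n),\\
\rho(4n+2)&=\rho(n)+\rho(n+1), & \rho(4n+3)&=2\rho(n+1),
\end{align*}
not the versions with $-1$ that you wrote, and the correct initial value is $\rho(1)=M(1)+1=2$, not $1$. These slips do not affect the kernel-module bookkeeping in principle --- the corrected recurrences do close up over the generators $(\rho(n))_n$, $(\rho(n+1))_n$, $(1)_n$ as you describe --- but as written three of your four recurrences and one initial value are wrong, so the alternative should not be spliced in without that correction.
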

\begin{proof}
This result follows from Theorem \ref{thm:abelcomp} and Lemma \ref{lem:rho}.
\end{proof}

\subsection{Some lemmas}
To prove Theorem \ref{thm:abelcomp}, we need the following lemmas.
\begin{lemma}\label{lem:iterate}
For any word $w \in \{a,b,c,d\}^{*}$, we have
\[ S(\sigma^{2}(w))=2S(w) \text{ and } S^{\prime}(\sigma^{2}(w))=2S^{\prime}(w).  \]
\end{lemma}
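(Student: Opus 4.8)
The plan is to exploit the fact that $S=\mathrm{DS}\circ\tau$ and $S^{\prime}=\mathrm{DS}\circ\tau^{\prime}$ are additive with respect to concatenation. Indeed, $\tau$ and $\tau^{\prime}$ are morphisms and $\mathrm{DS}$ is additive, so $S(uv)=S(u)+S(v)$ and $S^{\prime}(uv)=S^{\prime}(u)+S^{\prime}(v)$ for all $u,v\in\{a,b,c,d\}^{*}$. Since $\sigma$ is a morphism, so is $\sigma^{2}$, and hence each of the four maps $w\mapsto S(\sigma^{2}(w))$, $w\mapsto 2S(w)$, $w\mapsto S^{\prime}(\sigma^{2}(w))$, $w\mapsto 2S^{\prime}(w)$ is an additive functional on $\{a,b,c,d\}^{*}$. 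Two additive functionals that coincide on the generating alphabet coincide on all words, so it suffices to check the two identities for the single letters $a,b,c,d$.

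Next I would carry out the (short) verification on letters. From $\sigma:a\mapsto ab$, $b\mapsto ac$, $c\mapsto db$, $d\mapsto dc$ one computes $\sigma^{2}(a)=\sigma(ab)=abac$, $\sigma^{2}(b)=\sigma(ac)=abdb$, $\sigma^{2}(c)=\sigma(db)=dcac$, and $\sigma^{2}(d)=\sigma(dc)=dcdb$. Applying $\tau$ (i.e. $a,b\mapsto 1$ and $c,d\mapsto -1$) to these four words gives digit sums $2,2,-2,-2$, which are exactly $2S(a),2S(b),2S(c),2S(d)$, since $S(a)=S(b)=1$ and $S(c)=S(d)=-1$. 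Applying $\tau^{\prime}$ (i.e. $a,c\mapsto 1$ and $b,d\mapsto -1$) to the same four words gives $2,-2,2,-2$, which are exactly $2S^{\prime}(a),2S^{\prime}(b),2S^{\prime}(c),2S^{\prime}(d)$, since $S^{\prime}(a)=S^{\prime}(c)=1$ and $S^{\prime}(b)=S^{\prime}(d)=-1$. Combined with the additivity reduction, this proves both identities for arbitrary $w$.

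There is no real obstacle in this lemma; the only subtlety worth noting is that one genuinely needs the \emph{second} iterate of $\sigma$: a single application does not scale $S$ by a constant (for instance $S(\sigma(a))=2$ while $S(\sigma(b))=0$), so the clean factor $2$ only appears after squaring, which is precisely why the recurrences in Theorem~\ref{thm:abelcomp} are stated along powers of $4$ rather than powers of $2$.
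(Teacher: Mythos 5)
Your proof is correct and follows essentially the same route as the paper: reduce to single letters by additivity of $S$ and $S^{\prime}$ (and the fact that $\sigma^{2}$ is a morphism), then verify the four cases $\sigma^{2}(a)=abac$, $\sigma^{2}(b)=abdb$, $\sigma^{2}(c)=dcac$, $\sigma^{2}(d)=dcdb$ under $\tau$ and $\tau^{\prime}$. The only difference is that you carry out all eight letter computations explicitly (and add a sensible remark on why the second iterate is needed), whereas the paper checks one case and leaves the rest to the reader.
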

\begin{proof}Observing that both $S$ and $S^{\prime}$ are morphism from $(\{a,b,c,d\}^{*},\cdot)$ to $(\mathbb{Z},+)$ where `$\cdot$' is the
concatenation of words, we only need to show the equalities in the lemma hold for any letter $x\in\{a,b,c,d\}$. By the definition of $\sigma$,
we get
$${\sigma^2}: a\mapsto abac, b\mapsto abdb , c\mapsto dcac, d\mapsto dcdb.$$
Recall that $\tau : a\mapsto 1, b\mapsto 1 , c\mapsto -1, d\mapsto -1$. Thus
\begin{align*}
S({\sigma ^2}(a)) &= S(abac) =\mathrm{DS}\circ\tau(abac)=\mathrm{DS}(111 (- 1)) = 2 = 2S(a).
\end{align*}
One can verify the rest cases in the same way.
\end{proof}
\begin{lemma}\label{lem:Mm}
For any $n\geq 1$, \[M_{\mathbf{a}}(n)+m_{\mathbf{a}}(n)=0,\]
where $\mathbf{a}$ represents the Rudin-Shapiro sequence $\mathbf{r}$ or the sequence $\mathbf{r^{\prime}}$.
\end{lemma}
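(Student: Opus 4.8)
The plan is to produce a length-preserving symmetry of the fixed point $\mathbf{s}$ that negates digit sums, so that for every $n$ the set of digit sums of the length-$n$ factors is symmetric about $0$; the equality $M_{\mathbf{a}}(n)+m_{\mathbf{a}}(n)=0$ is then immediate.

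First I would introduce the involution $\pi$ of $\{a,b,c,d\}$ that swaps $a\leftrightarrow d$ and $b\leftrightarrow c$, extended to a morphism of $\{a,b,c,d\}^{*}$ (and continuously to $\{a,b,c,d\}^{\mathbb{N}}$). A direct check on the four letters shows that $\pi$ commutes with $\sigma$, i.e. $\pi\circ\sigma=\sigma\circ\pi$; hence $\pi(\mathbf{s})=\pi(\sigma^{\infty}(a))=\sigma^{\infty}(\pi(a))=\sigma^{\infty}(d)$, the fixed point of $\sigma$ starting with $d$. A second check on the four letters shows that $\tau\circ\pi$ and $\tau^{\prime}\circ\pi$ are obtained from $\tau$ and $\tau^{\prime}$ by negating the image letter, so that $S(\pi(w))=-S(w)$ and $S^{\prime}(\pi(w))=-S^{\prime}(w)$ for every $w\in\{a,b,c,d\}^{*}$.

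The step with real content is that $\mathbf{s}$ and $\sigma^{\infty}(d)$ have the same set of factors. Here I would use that $\sigma$ is primitive: a short computation gives that $\sigma^{3}(x)$ contains all four letters for each $x\in\{a,b,c,d\}$, so in particular $d$ occurs in $\mathbf{s}$ and $a$ occurs in $\sigma^{\infty}(d)$. Since both words are fixed by $\sigma$ and a morphism maps factors to factors, $\sigma^{N}(d)\prec\mathbf{s}$ and $\sigma^{N}(a)\prec\sigma^{\infty}(d)$ for every $N$; as every factor of a fixed point of $\sigma$ already occurs in $\sigma^{N}$ of its leading letter for $N$ large, we conclude $\mathcal{F}_{\mathbf{s}}(n)=\mathcal{F}_{\sigma^{\infty}(d)}(n)$ for all $n$. (Alternatively one may simply invoke the standard fact that all fixed points of a primitive substitution share the same language.)

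Combining these facts: since $\pi(\mathbf{s})=\sigma^{\infty}(d)$ has the same factors as $\mathbf{s}$ and $\pi$ is a length-preserving involution, $\pi$ restricts to a bijection of $\mathcal{F}_{\mathbf{s}}(n)$ onto itself with $S(\pi(u))=-S(u)$, so $\{S(u):u\in\mathcal{F}_{\mathbf{s}}(n)\}$ is symmetric about $0$; the same holds for $S^{\prime}$. Since $M_{\mathbf{r}}(n)=\max_{u\in\mathcal{F}_{\mathbf{s}}(n)}S(u)$, $m_{\mathbf{r}}(n)=\min_{u\in\mathcal{F}_{\mathbf{s}}(n)}S(u)$, and likewise for $\mathbf{r^{\prime}}$ with $S^{\prime}$, we obtain $M_{\mathbf{a}}(n)+m_{\mathbf{a}}(n)=0$ for all $n\geq 1$. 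The main obstacle is the bookkeeping in the factor-set equality between the two fixed points; the remaining steps are finite verifications on the four letters.
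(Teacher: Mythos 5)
Your proposal is correct and is essentially the paper's own argument: the paper uses exactly the same involution (its coding $\mu:a\leftrightarrow d$, $b\leftrightarrow c$), the same commutation $\mu\circ\sigma=\sigma\circ\mu$ and the sum-negating property $S(\mu(W))=-S(W)$, and establishes factor invariance by embedding $\mu(\sigma^{k}(a))=\sigma^{k}(d)$ into $\sigma^{k+4}(a)$, which is just a hands-on version of your primitivity/equal-languages step. Your observation that the involution induces a sum-negating bijection of $\mathcal{F}_{\mathbf{s}}(n)$ even streamlines the paper's slightly roundabout ``fact 2'' argument, but the underlying idea is the same.
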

\begin{proof}
We only prove the case $\mathbf{a}=\mathbf{r}$. The result for $\mathbf{a}=\mathbf{r^{\prime}}$ follows in the same way.

Let $\mu$ be the coding \[\mu: a\mapsto d, b\mapsto c, c\mapsto b, d\mapsto a.\] Then $\mu\circ\sigma=\sigma\circ\mu$ and $\mu\circ\mu={\rm
Id}$. We shall start by proving the following two facts: for any $W\in\{a,b,c,d\}^{n}$ ($n\geq 1$),
\begin{enumerate}
\item $W$ is a factor of $\mathbf{s}$ if and only if $\mu(W)$ is a factor of $\mathbf{s}$\ww{;}
\item $S(W)=M_{\mathbf{r}}(n)$ if and only if $S(\mu(W))=m_{\mathbf{r}}(n)$.
\end{enumerate}
For the fact $1$, if $W$ is a factor of $\mathbf{s}$, then $W$ is a factor of $\sigma^{k}(a)$ for some $k$. Therefore $\mu(W)$ is a factor of
$\mu(\sigma^{k}(a))=\sigma^{k}(d)$ which is a factor of $\sigma^{k+4}(a)$. Hence $\mu(W)$ is also a factor of $\mathbf{s}$. The converse holds
in the same argument by replacing $W$ by $\mu(W)$. Now we will prove fact $2$. Suppose $S(W)=M_{\mathbf{r}}(n)$ and $S(\mu(W))\neq
m_{\mathbf{r}}(n)$. Without lose of generality, assume that $S(\mu(W))> m_{\mathbf{r}}(n)$. This means there exists a word $W^{\prime}$ of
length $n$, such that $|W^{\prime}|_{-1}>|\mu(W)|_{-1}$. Therefore \[|\mu(W^{\prime})|_{1}=|W^{\prime}|_{-1}>|\mu(W)|_{-1}=|W|_{1}.\] It
follows that $M_{\mathbf{r}}(n)=S(W)<S(\mu(W^{\prime}))$ which is a contradiction. The converse can be proved by using the similar argument.

Noticing that $S(\mu(W))=-S(W)$, then by fact 1 and 2, the proof is completed.
\end{proof}

\begin{lemma}\label{lem:rho}
For any $n\geq 1$,
\[    \rho_{\mathbf{a}}(n) = M_{\mathbf{a}}(n)+1,  \]
where $\mathbf{a}$ represents the Rudin-Shapiro sequence $\mathbf{r}$ or the sequence $\mathbf{r^{\prime}}$.
\end{lemma}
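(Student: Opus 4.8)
The plan is simply to chain together the two facts already available: Proposition~\ref{lem:aMm}, which expresses $\rho_{\mathbf{w}}$ in terms of the maximal and minimal digit sums for any $\mathbf{w}\in\{-1,1\}^{\mathbb{N}}$, and Lemma~\ref{lem:Mm}, which says that for $\mathbf{a}=\mathbf{r}$ or $\mathbf{a}=\mathbf{r}^{\prime}$ the minimal sum is the negative of the maximal sum. First I would note that both $\mathbf{r}$ and $\mathbf{r}^{\prime}$ lie in $\{-1,1\}^{\mathbb{N}}$, so Proposition~\ref{lem:aMm} is applicable and gives, for every $n\geq 1$,
\[
\rho_{\mathbf{a}}(n)=\frac{M_{\mathbf{a}}(n)-m_{\mathbf{a}}(n)}{2}+1.
\]
Next, by Lemma~\ref{lem:Mm} we have $M_{\mathbf{a}}(n)+m_{\mathbf{a}}(n)=0$, i.e.\ $m_{\mathbf{a}}(n)=-M_{\mathbf{a}}(n)$. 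Substituting this into the displayed identity collapses the numerator to $2M_{\mathbf{a}}(n)$, whence $\rho_{\mathbf{a}}(n)=M_{\mathbf{a}}(n)+1$, as claimed.

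There is no genuine obstacle at this stage: the real work has been done in establishing Proposition~\ref{lem:aMm} (a known result, cited from \cite{BBT11}) and Lemma~\ref{lem:Mm} (the symmetry coming from the coding $\mu$). If one wishes to be careful, one may also remark that $\mathrm{DS}(v)\equiv n \pmod 2$ for every $v\in\mathcal{F}_{\mathbf{a}}(n)$, so $M_{\mathbf{a}}(n)-m_{\mathbf{a}}(n)$ is automatically even and the fraction above is an integer, but this is already implicit in Proposition~\ref{lem:aMm}. The statement for $\mathbf{r}$ and for $\mathbf{r}^{\prime}$ is obtained by the same computation, simply reading $\mathbf{a}$ as the relevant sequence throughout.
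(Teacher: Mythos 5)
Your proposal is correct and is exactly the argument the paper uses: it derives the identity by combining Proposition~\ref{lem:aMm} with the symmetry $M_{\mathbf{a}}(n)+m_{\mathbf{a}}(n)=0$ from Lemma~\ref{lem:Mm}. The extra parity remark is harmless but, as you note, already implicit in the cited proposition.
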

\begin{proof}
The result follows from Proposition \ref{lem:aMm} and Lemma \ref{lem:Mm}.
\end{proof}

The following lemma characterizes digit sums $\Sigma_{\mathbf{r}}(\cdot,\cdot)$ which is useful in the study of $M_{\mathbf{r}}$.
\begin{lemma}\label{lem:sum}
For any $n\geq 1, i\geq 0$, we have
\begin{enumerate}[(1)]
\item $\Sigma_{\mathbf{r}}(4i,4n)  = 2\Sigma_{\mathbf{r}}(i,n),$
\item $  \Sigma_{\mathbf{r}}(4i + 1,4n)  = \Sigma_{\mathbf{r}}(i,n) + \Sigma_{\mathbf{r}}(i + 1,n), $
\item   $\Sigma_{\mathbf{r}}(4i + 2,4n)  = 2\Sigma_{\mathbf{r}}(i + 1,n),$
\item   $\Sigma_{\mathbf{r}}(4i + 3,4n) = 2\Sigma_{\mathbf{r}}(i + 1,n) - r(4i + 4n + 3) + r(4i + 3),  $
\item  $ \Sigma_{\mathbf{r}}(4i,4n + 1)  = 2\Sigma_{\mathbf{r}}(i,n) + r(i + n),$
\item  $ \Sigma_{\mathbf{r}}(4i + 1,4n + 1) = 2\Sigma_{\mathbf{r}}(i + 1,n) + r(i),  $
\item  $ \Sigma_{\mathbf{r}}(4i + 2,4n + 1) = 2\Sigma_{\mathbf{r}}(i + 1,n) + r(4i + 4n + 2),$
\item  $ \Sigma_{\mathbf{r}}(4i + 3,4n + 1) = 2\Sigma_{\mathbf{r}}(i + 1,n) + r(4i + 3); $
\item  $ \Sigma_{\mathbf{r}}(4i,4n + 2) = \Sigma_{\mathbf{r}}(i,n)+\Sigma_{\mathbf{r}}(i,n + 1)+r(i+n),$
\item  $ \Sigma_{\mathbf{r}}(4i + 1,4n + 2) = \Sigma_{\mathbf{r}}(i + 1,n) + \Sigma_{\mathbf{r}}(i,n + 1) + r(4i + 4n + 2),  $
\item  $ \Sigma_{\mathbf{r}}(4i + 2,4n + 2) = \Sigma_{\mathbf{r}}(i + 1,n)+\Sigma_{\mathbf{r}}(i + 1,n+1)-r(i+n+1),$
\item   $\Sigma_{\mathbf{r}}(4i + 3,4n + 2) = \Sigma_{\mathbf{r}}(i + 1,n) +\Sigma_{\mathbf{r}}(i + 1,n + 1) + r(4i + 3);$
\item  $ \Sigma_{\mathbf{r}}(4i,4n + 3) = 2\Sigma_{\mathbf{r}}(i,n + 1) - r(4i + 4n + 3),$
\item  $ \Sigma_{\mathbf{r}}(4i + 1,4n + 3) = 2\Sigma_{\mathbf{r}}(i,n + 1) - r(i), $
\item  $ \Sigma_{\mathbf{r}}(4i + 2,4n + 3) = 2\Sigma_{\mathbf{r}}(i + 1,n + 1) - r(i + n + 1),$
\item  $ \Sigma_{\mathbf{r}}(4i + 3,4n + 3) = 2\Sigma_{\mathbf{r}}(i + 1,n + 1) + r(4i + 3).$
\end{enumerate}
\end{lemma}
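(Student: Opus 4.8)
\emph{Plan of proof.} The whole lemma rests on two elementary identities for the values of $r$ on residue classes modulo $4$, both read off directly from \eqref{eq:recurrent}: for every $k\geq 0$,
\[
r(4k)=r(4k+1)=r(k)\quad\text{and}\quad r(4k+2)=-r(4k+3)=(-1)^{k}r(k).
\]
Indeed $r(4k)=r(2k)=r(k)$, $r(4k+1)=(-1)^{2k}r(2k)=r(k)$, $r(4k+2)=r(2k+1)=(-1)^{k}r(k)$, and $r(4k+3)=(-1)^{2k+1}r(2k+1)=-(-1)^{k}r(k)$. Three consequences will be used constantly: a complete block of four consecutive positions aligned at a multiple of $4$ satisfies $r(4k)+r(4k+1)+r(4k+2)+r(4k+3)=2r(k)$; its last three entries satisfy $r(4k+1)+r(4k+2)+r(4k+3)=r(k)$; and its last two entries satisfy $r(4k+2)+r(4k+3)=0$.

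First I would fix $a,b\in\{0,1,2,3\}$ and decompose the window of $4n+b$ consecutive positions $\{4i+a,\,4i+a+1,\,\dots,\,4i+a+4n+b-1\}$ as the disjoint union of an initial incomplete block (the positions of the window lying in $\{4i,\dots,4i+3\}$, which is nonempty exactly when $a\geq 1$), a maximal run of complete blocks $\{4k,\dots,4k+3\}$, and a terminal incomplete block. Summing $r$ over the run of complete blocks gives $2\sum_{k}r(k)$, which equals $2\,\Sigma_{\mathbf{r}}(\cdot,\cdot)$ over an explicit window by the full-block identity; summing $r$ over the two incomplete blocks is handled termwise by the residue identities above and produces exactly the correction terms (one of $0$, $r(k)$, $2r(k)$, $2r(k)-r(4k+3)$, $r(4k+3)$) that appear in (1)--(16). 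This yields each of the sixteen identities in a first form, and empty runs of complete blocks cause no trouble since the empty sum is $0$.

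Finally I would massage the first form into the precise form stated by repeatedly applying the window-shift identities $\Sigma_{\mathbf{r}}(j,m+1)=\Sigma_{\mathbf{r}}(j,m)+r(j+m)=r(j)+\Sigma_{\mathbf{r}}(j+1,m)$ together with the sign flip $r(4k+2)=-r(4k+3)$, so that the surviving boundary terms sit in the advertised normalization (for example, in (13) the terminal block contributes $2r(i+n)-r(4i+4n+3)$, which combines with the complete blocks into $2\,\Sigma_{\mathbf{r}}(i,n+1)-r(4i+4n+3)$). The content of the argument is thus entirely routine; the only place demanding care is the bookkeeping in the decomposition step, where for each of the sixteen cases one must correctly count how many complete blocks occur and which positions land in the two incomplete blocks, i.e.\ avoid off-by-one errors. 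Arranging the cases in a $4\times 4$ table indexed by $(a,b)$ makes each verification a one-line computation, and the resulting formulas can be cross-checked against small values of $i$ and $n$.
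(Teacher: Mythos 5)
Your proposal is correct and follows essentially the same route as the paper: both derive the mod-4 identities $r(4k)=r(4k+1)=r(k)$, $r(4k+2)=-r(4k+3)=(-1)^{k}r(k)$ from \eqref{eq:recurrent} and then verify the sixteen formulas directly by decomposing each window into complete blocks of four (each contributing $2r(k)$) plus boundary terms handled by window shifts. The paper simply carries out two representative cases and leaves the remaining bookkeeping to the reader, exactly as you describe.
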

\begin{proof}
By \eqref{eq:recurrent} we have for all $n\geq 0$
$${r(4n)} = {r(4n + 1)} = r(n),~r(4n+2) =- r(4n+3) = {( - 1)^n}r(n). $$
Then by the previous equations and the definition of $\Sigma_{\mathbf{r}}$, these $16$ equations can be verified directly. Here we give the
proof of the first two equations as examples:
\begin{align*}
\Sigma_{\mathbf{r}}(4i,4n) & = \sum_{j = 4i}^{4i + 4n - 1} {r(j)}  \\
&=\sum_{j=i}^{i+n-1}(r(4j)+r(4j+1)+r(4j+2)+r(4j+3))\\
& =\sum_{j=i}^{i+n-1}(r(j)+r(j)+(-1)^{j}r(j)-(-1)^{j}r(j))\\
&=2\sum_{j=i}^{i+n-1}r(j)=2\Sigma_{\mathbf{r}}(i,n).\\
\Sigma_{\mathbf{r}}(4i+1,4n) & = \Sigma_{\mathbf{r}}(4i,4n)+r(4i+4n)-r(4i)=2\Sigma_{\mathbf{r}}(i,n)+r(i+n)-r(i)\\
&=\Sigma_{\mathbf{r}}(i,n)+\Sigma_{\mathbf{r}}(i+1,n);
\end{align*}
The rest equations can be proved in the same way.
\end{proof}
\begin{remark}
Lemma \ref{lem:sum} implies that the double sequence $(\Sigma_{\mathbf{r}})_{i\geq 0,n\geq 1}$ is a two-dimension {$2$}-regular sequence.
For a definition of two-dimensional regular sequences, see \cite{ALL}.
\end{remark}

The following lemma gives upper bounds of the maximal values of the sums of consecutive $n$ terms of $\mathbf{r}$ and
$\mathbf{r}^{\prime}$.
\begin{lemma}\label{lem:upperbound}
For any  $n\geq 1$,
\begin{eqnarray*}
M_{\mathbf{r}}(4n)& \leq & 2M_{\mathbf{r}}(n)+2,\\
M_{\mathbf{r}}(4n+1)&\leq & 2M_{\mathbf{r}}(n)+1,\\
M_{\mathbf{r}}(4n+2)&\leq & M_{\mathbf{r}}(n)+M_{\mathbf{r}}(n+1)+1,\\
M_{\mathbf{r}}(4n+3)&\leq & 2M_{\mathbf{r}}(n+1)+1.
\end{eqnarray*}
Moreover, the above inequalities also holds for $M_{\mathbf{r^{\prime}}}$.
\end{lemma}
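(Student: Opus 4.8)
The plan is to prove all eight inequalities by a single uniform device: reduce each $M_{\mathbf{r}}(4n+\ell)$ (for $\ell\in\{0,1,2,3\}$) to a maximum of digit sums, split that maximum according to the residue class modulo $4$ of the starting index, and then apply the sixteen identities of Lemma~\ref{lem:sum}. Concretely, fix $\ell\in\{0,1,2,3\}$; since $M_{\mathbf{r}}(4n+\ell)=\max_{i\ge 0}\Sigma_{\mathbf{r}}(i,4n+\ell)$ and every $i\ge 0$ is uniquely $i=4j+s$ with $j\ge 0$ and $s\in\{0,1,2,3\}$, we have
\[M_{\mathbf{r}}(4n+\ell)=\max_{s\in\{0,1,2,3\}}\ \max_{j\ge 0}\ \Sigma_{\mathbf{r}}(4j+s,4n+\ell).\]
For each value of $s$ the relevant item of Lemma~\ref{lem:sum} rewrites $\Sigma_{\mathbf{r}}(4j+s,4n+\ell)$ as a sum of one or two terms among $\Sigma_{\mathbf{r}}(j,n)$, $\Sigma_{\mathbf{r}}(j+1,n)$, $\Sigma_{\mathbf{r}}(j,n+1)$, $\Sigma_{\mathbf{r}}(j+1,n+1)$, plus at most two correction terms of the shape $\pm r(\cdot)$. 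Bounding each $\Sigma_{\mathbf{r}}(\cdot,n)$ by $M_{\mathbf{r}}(n)$, each $\Sigma_{\mathbf{r}}(\cdot,n+1)$ by $M_{\mathbf{r}}(n+1)$, and each $r(\cdot)$ by $1$ in absolute value, the right-hand side collapses, in each of the sixteen cases, to exactly the bound claimed for that $\ell$.

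To see the mechanism on the case $\ell=0$: items (1)--(4) of Lemma~\ref{lem:sum} give
\begin{align*}
\Sigma_{\mathbf{r}}(4j,4n)&=2\Sigma_{\mathbf{r}}(j,n)\le 2M_{\mathbf{r}}(n),\\
\Sigma_{\mathbf{r}}(4j+1,4n)&=\Sigma_{\mathbf{r}}(j,n)+\Sigma_{\mathbf{r}}(j+1,n)\le 2M_{\mathbf{r}}(n),\\
\Sigma_{\mathbf{r}}(4j+2,4n)&=2\Sigma_{\mathbf{r}}(j+1,n)\le 2M_{\mathbf{r}}(n),\\
\Sigma_{\mathbf{r}}(4j+3,4n)&=2\Sigma_{\mathbf{r}}(j+1,n)-r(4j+4n+3)+r(4j+3)\le 2M_{\mathbf{r}}(n)+2,
\end{align*}
so $M_{\mathbf{r}}(4n)\le 2M_{\mathbf{r}}(n)+2$. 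The cases $\ell=1,2,3$ run identically using items (5)--(8), (9)--(12), (13)--(16); in each of these every identity carries exactly one correction term $\pm r(\cdot)$, which is why the additive constant drops to $1$, and the $\ell=2$ identities pair a term controlled by $M_{\mathbf{r}}(n)$ with one controlled by $M_{\mathbf{r}}(n+1)$, producing $M_{\mathbf{r}}(n)+M_{\mathbf{r}}(n+1)+1$. For $\mathbf{r^{\prime}}$ one first records the sixteen identities for $\Sigma_{\mathbf{r^{\prime}}}$ that are the exact analogues of Lemma~\ref{lem:sum} --- they follow the same way from the recurrences $r^{\prime}(2n)=(-1)^{n}r^{\prime}(n)$, $r^{\prime}(2n+1)=-r^{\prime}(n)$, or alternatively from Lemma~\ref{lem:sum} by substituting $r(m)=(-1)^{m}r^{\prime}(m)$ --- and then the identical case analysis yields the four inequalities for $M_{\mathbf{r^{\prime}}}$.

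I do not expect a genuine obstacle here: once Lemma~\ref{lem:sum} is available the proof is a finite, mechanical verification. The only points that need attention are bookkeeping ones: checking that the hypothesis $n\ge 1$ of Lemma~\ref{lem:sum} holds in every invocation (it does, since $n\ge 1$ makes $M_{\mathbf{r}}(n)$ and $M_{\mathbf{r}}(n+1)$ meaningful and the lemma applies with its ``$n$'' equal to ours and its ``$i$'' equal to $j\ge 0$), and counting carefully the $\pm 1$ correction terms in each of the sixteen identities so that the additive constant comes out exactly $2$ when $\ell=0$ and exactly $1$ when $\ell\in\{1,2,3\}$. The matching lower bounds --- the substantive input needed to upgrade these inequalities to the equalities of Theorem~\ref{thm:abelcomp} --- are a separate matter and lie outside this lemma.
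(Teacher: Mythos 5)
Your proposal is correct and follows essentially the same route as the paper: split the starting index modulo $4$, apply the sixteen identities of Lemma~\ref{lem:sum} (and their analogues for $\mathbf{r^{\prime}}$), bound each partial sum by $M_{\mathbf{r}}(n)$ or $M_{\mathbf{r}}(n+1)$ and each $r(\cdot)$ term by $1$. Your worked case $\ell=0$ matches the paper's own sample computation, and the remaining cases are handled there in exactly the mechanical way you describe.
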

\begin{proof}
For the first inequality, we shall use the first four equations of Lemma \ref{lem:sum}. By equations (1) to (3) of Lemma \ref{lem:sum}, we
obtain that for $k=0,1,2$,
\begin{align*}
\Sigma_{\mathbf{r}}(4i+k,4n) &\leq \max\{2\Sigma_{\mathbf{r}}(i,n),
\Sigma_{\mathbf{r}}(i,n)+\Sigma_{\mathbf{r}}(i+1,n),2\Sigma_{\mathbf{r}}(i+1,n)\}\\
&\leq 2M_{\mathbf{r}}(n).
\end{align*}
When $k=3$, by equation (4) of Lemma \ref{lem:sum}, we have
\begin{align*}
\Sigma_{\mathbf{r}}(4i+k,4n) &= 2\Sigma_{\mathbf{r}}(i+1,n)-r(4i+4n+3)+r(4i+3)\\
&\leq 2M_{\mathbf{r}}(n)+2.
\end{align*}
Therefore \(M_{\mathbf{r}}(4n)\leq 2M_{\mathbf{r}}(n)+2.\)

In a similar way, using the rest 12 equations of Lemma \ref{lem:sum}, we can prove
the rest three inequalities for $M_{\mathbf{r}}$.

 To prove the result for $M_{\mathbf{r^{\prime}}}$, one can deduce a similar result to Lemma \ref{lem:sum} for $\mathbf{r^{\prime}}$, and apply
 the similar argument as above. We left the details to the reader.
\end{proof}

\iffalse

\begin{proof}\color{cyan}
This lemma can be proved by the recurrence relation (\ref{eq:recurrent}). However, we will give a more intuitive proof. It is enough to prove
the first inequality. The rest of them follows in the same way.

Let $\mathcal{B}:=\{a,b,c,d\}$. Any word in $W\in\mathcal{B}^{4n}$ must be a factor of the word
\[\sigma^{2}(xUy)=x_{1}x_{2}x_{3}x_{4}u_{1}\cdots u_{4n}y_{1}y_{2}y_{3}y_{4}\] for some $x,y\in\mathcal{B}$ and $U\in\mathcal{B}^{n}$. Notice
that for any $i\in\mathcal{B}$, $s(\tau(\sigma^{2}(i)))=2s(\tau(i))$. To be continued...
\end{proof}
\fi

\subsection{Proof of Theorem \ref{thm:abelcomp}}
To prove Theorem \ref{thm:abelcomp}, we only need to show that all equalities in Lemma \ref{lem:upperbound} hold. For this, we will construct
two sequences of words which attain the upper bounds in Lemma \ref{lem:upperbound} for $\mathbf{r}$ and $\mathbf{r^{\prime}}$
respectively. These will be done in the following Lemma \ref{lem:maxfactor} and \ref{lem:maxfactor2}. Then Theorem \ref{thm:abelcomp}
follows directly  from Lemma \ref{lem:upperbound}, \ref{lem:maxfactor} and \ref{lem:maxfactor2}.

Now we will give the sequence of words for $\mathbf{r}$. Let $(W_{n})_{n\geq 1}$ be the sequence of words defined by   $W_{1}=a$,
$W_{2}=ba$, $W_{3}=aba$ and
\begin{equation}\label{eq:maxW}\left\{
\begin{array}{ccl}
   {W_{4n}} &=& b{\sigma ^2}({W_n}){c^{ - 1}},  \\
   {W_{4n + 1}}& =& b{\sigma ^2}({W_n}),  \\
   {W_{4n + 2}} &= &
   \begin{cases}
     b{\sigma ^2}(W_{n+1}){(bac)^{ - 1}}& \text{ if } \Delta M_{\mathbf{r}}(n) = 1, \\
     cdb{\sigma ^2}({W_n}){c^{-1}} & \text{ if } \Delta M_{\mathbf{r}}(n) =  - 1,
      \end{cases}\\
   {W_{4n + 3}} &=& {\sigma ^2}({W_{n + 1}}){c^{ - 1}}.
\end{array}\right.
\end{equation}

\begin{lemma}\label{lem:maxfactor}
Let $(W_{n})_{n\geq 1}\subset\{a,b,c,d\}^{*}$ given by (\ref{eq:maxW}). Then for any $n\geq 1$,
\begin{enumerate}[\indent$(i)$]
\item  either $bW_{n}\prec \mathbf{s}$ or $dW_{n}\prec\mathbf{s}$ holds;\label{l:r2}
\item  either $a\triangleright W_{n}$ or $c\triangleright W_{n}$ holds; \label{l:r3}
\item $W_{n}\in\mathcal{M}_{\mathbf{s}}(n)$.\label{l:r1}
\end{enumerate}
\end{lemma}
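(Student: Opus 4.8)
The plan is to prove all three statements simultaneously by strong induction on $n$, since the recursive definition \eqref{eq:maxW} of $W_{4n+j}$ refers back to $W_n$ or $W_{n+1}$, and the correctness of the construction in one case hinges on the structural properties $(i)$ and $(ii)$ established in earlier cases. The base cases $n=1,2,3$ are checked by hand: $W_1=a$, $W_2=ba$, $W_3=aba$, and one verifies directly that $ba$, $bba$ (or $aba$ — whichever occurs), $aaba$ etc. are factors of $\mathbf{s}=\sigma^\infty(a)$, that each ends in $a$ or $c$, and that $S(W_1)=1=M(1)$, $S(W_2)=2=M(2)$, $S(W_3)=3=M(3)$.

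For the inductive step, fix $n\ge 1$ and assume $(i)$–$(iii)$ hold for all indices up to $\max\{n,n+1\}$; I treat the four residue classes in turn. In each case the strategy is the same: use $(i)$ for $W_n$ (or $W_{n+1}$) to know that $bW_n$ or $dW_n$ is a factor of $\mathbf{s}$, apply $\sigma^2$ — noting $\sigma^2(b)=abdb$, $\sigma^2(d)=dcdb$ both begin with the block that lets us prepend the right single letter — so that $\sigma^2(bW_n)$ or $\sigma^2(dW_n)$ is a factor of $\mathbf{s}$, then read off a factor of the stated length by trimming a short prefix/suffix. Concretely, for $W_{4n}=b\sigma^2(W_n)c^{-1}$: since $bW_n\prec\mathbf{s}$ or $dW_n\prec\mathbf{s}$, and $\sigma^2(ab)=abacabdb$, $\sigma^2(ad)$-type adjacencies force the letter immediately before $\sigma^2(W_n)$ inside $\mathbf{s}$ to be $b$, and by $(ii)$ $W_n$ ends in $a$ or $c$, so $\sigma^2(W_n)$ ends in $\sigma^2(a)=abac$ or $\sigma^2(c)=dcac$, both ending in $ac$; hence deleting the final $c$ is legitimate and the resulting word still lies in $\mathbf{s}$, ends in $a$, and has length $4n$. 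For the digit sum, Lemma \ref{lem:iterate} gives $S(\sigma^2(W_n))=2S(W_n)=2M(n)$, and prepending $b$ ($S=1$) while deleting $c$ ($S=-1$) yields $S(W_{4n})=2M(n)+2$, matching the upper bound in Lemma \ref{lem:upperbound}, so $W_{4n}\in\mathcal M_{\mathbf{s}}(4n)$. The cases $W_{4n+1}$ and $W_{4n+3}$ are analogous but shorter (no deletion, or only a single deletion, and using $W_{n+1}$ where indicated); the case $W_{4n+2}$ splits according to the sign of $\Delta M_{\mathbf{r}}(n)$, and here one invokes the inductive knowledge of both $W_n$ and $W_{n+1}$ together with the identity $M(n)+M(n+1)+1$ from Lemma \ref{lem:upperbound}: when $\Delta M_{\mathbf{r}}(n)=1$ one has $M(n+1)=M(n)+1$ so $S(b\sigma^2(W_{n+1})(bac)^{-1})=1+2M(n+1)-1=2M(n+1)=M(n)+M(n+1)+1$, and when $\Delta M_{\mathbf{r}}(n)=-1$ one has $M(n)=M(n+1)+1$ and $S(cdb\sigma^2(W_n)c^{-1})=-1-1+1+2M(n)+1=2M(n)=M(n)+M(n+1)+1$; in both sub-cases one must separately confirm the word is a genuine factor of $\mathbf{s}$ ending in $a$ or $c$, again using $(i),(ii)$ for the relevant $W_m$ and the explicit images of $\sigma^2$.

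The main obstacle is the combinatorial bookkeeping in the factor-membership claim $(i)$, i.e.\ verifying that each trimming operation $c^{-1}$, $(bac)^{-1}$, or each prepending of $b$, $cdb$ is actually compatible with the structure of $\mathbf{s}$ — this is where one genuinely needs to know which letters can precede and follow a $\sigma^2$-image inside the fixed point, and it is the reason properties $(i)$ and $(ii)$ are carried through the induction rather than proved separately afterward. Once membership in $\mathcal F_{\mathbf{s}}$ is secured, the digit-sum computation is a mechanical application of Lemma \ref{lem:iterate} plus the values $S(a)=S(b)=1$, $S(c)=S(d)=-1$, and the matching with Lemma \ref{lem:upperbound} is immediate. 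Finally, the entirely parallel construction $(W_n')$ and Lemma \ref{lem:maxfactor2} for $\mathbf{r}^{\prime}$ use $\tau^{\prime}$ in place of $\tau$; since $\sigma$ is unchanged and Lemma \ref{lem:iterate} was stated for $S^{\prime}$ as well, the same induction goes through verbatim with the arithmetic adjusted to $\tau^{\prime}:a,c\mapsto 1$, $b,d\mapsto -1$, and Theorem \ref{thm:abelcomp} then follows by combining Lemma \ref{lem:upperbound} with Lemmas \ref{lem:maxfactor} and \ref{lem:maxfactor2}.
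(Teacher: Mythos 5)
Your overall strategy is the same as the paper's: a simultaneous strong induction on $(i)$--$(iii)$, using Lemma \ref{lem:iterate} for digit sums and Lemma \ref{lem:upperbound} to upgrade ``$W_m$ is a factor whose digit sum meets the upper bound'' to $W_m\in\mathcal{M}_{\mathbf{s}}(m)$; your arithmetic in all four residue classes is correct, and your observation that $\sigma^2(b)=abdb$ and $\sigma^2(d)=dcdb$ both end in $db$ (you write ``begin'', but you use it correctly) does handle the cases $4n$, $4n+1$, $4n+3$. The genuine gap is exactly at the point you flag as ``the main obstacle'' and then dismiss as bookkeeping: in the case $4n+2$, properties $(i)$ and $(ii)$ as carried by the induction are \emph{not} strong enough, and two extra arguments are required. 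First, when $\Delta M_{\mathbf{r}}(n)=-1$, the word $W_{4n+2}=cdb\,\sigma^2(W_n)c^{-1}$ needs $cdb\,\sigma^2(W_n)\prec\mathbf{s}$, which you can only extract from $\sigma^2(dW_n)=dcdb\,\sigma^2(W_n)$; the alternative permitted by $(i)$, namely that only $bW_n\prec\mathbf{s}$, gives $\sigma^2(bW_n)=abdb\,\sigma^2(W_n)$ and no prefix $cdb$. One must rule this alternative out: if $bW_n\prec\mathbf{s}$, then $M_{\mathbf{r}}(n+1)\geq S(bW_n)=S(W_n)+1=M_{\mathbf{r}}(n)+1$, contradicting $\Delta M_{\mathbf{r}}(n)=-1$; hence $dW_n\prec\mathbf{s}$. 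Second, when $\Delta M_{\mathbf{r}}(n)=1$, trimming $(bac)^{-1}$ from $b\,\sigma^2(W_{n+1})$ is legitimate only if $\sigma^2(W_{n+1})$ ends in $abac$, i.e.\ only if $a\triangleright W_{n+1}$; property $(ii)$ also allows $c\triangleright W_{n+1}$, and this must be excluded: if $W_{n+1}=Wc$ then $M_{\mathbf{r}}(n+1)=S(W)-1\leq M_{\mathbf{r}}(n)-1$, contradicting $\Delta M_{\mathbf{r}}(n)=1$. These two short digit-sum contradictions are precisely what the paper inserts, and without them the induction does not close (they are also what makes $(ii)$ propagate to $W_{4n+2}$ in the first sub-case).

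Two smaller points. Your case split tacitly assumes $\Delta M_{\mathbf{r}}(n)\in\{1,-1\}$; this is fine, but it should be noted that it follows from the recurrence values $S(W_m)=M_{\mathbf{r}}(m)$ already established at earlier stages of the induction (giving $\Delta M_{\mathbf{r}}(4k)=-1$, $\Delta M_{\mathbf{r}}(4k+3)=1$, $\Delta M_{\mathbf{r}}(4k+1)=\Delta M_{\mathbf{r}}(4k+2)=\Delta M_{\mathbf{r}}(k)$). Your choice of base cases $n=1,2,3$ is structurally admissible since the step for index $4n+j$ only uses $n$ and $n+1<4n+j$, but the base verification should be done cleanly (e.g.\ all of $W_1,\dots,W_7$ are factors of $\sigma^{2}(dba)=dcdbabdbabac\prec\mathbf{s}$, as in the paper's table), rather than the hedged ``whichever occurs'' check you sketch.
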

\begin{proof}
We shall prove $(\ref{l:r2})$, $(\ref{l:r3})$ and $(\ref{l:r1})$ simultaneously by induction.

\emph{Step 1.} We shall show that the results hold for $n< 8$. Let $(W_{n})_{n=1}^{7}$ be the words given in table \ref{tab:1}. For
$n=1,2,3,4$, apparently $M_{\mathbf{r}}(n)=S(W_{n})$ which implies $W_{n}\in\mathcal{M}_{\mathbf{s}}(n)$. Since
$S(W_{5})=2M_{\mathbf{r}}(1)+1$, $S(W_{6})=M_{\mathbf{r}}(1)+M_{\mathbf{r}}(2)+1$ and $S(W_{7})=2M_{\mathbf{r}}(2)+1$, by Lemma
\ref{lem:upperbound}, we have $S(W_{n})=M_{\mathbf{r}}(n)$ and $W_{n}\in\mathcal{M}_{\mathbf{s}}(n)$ for $n=5,6,7$. Therefore $(\ref{l:r1})$
holds for $n< 8$. Notice that $(W_{n})_{n=1}^{7}$ are factors of $\sigma^{2}(dba)=dcdbabdbabac$ which is a factor of $\mathbf{s}$,
$(\ref{l:r2})$ and $(\ref{l:r3})$ also hold for $n< 8$.
\begin{table}[htbp]
\centering
\begin{tabular}{|>{$}c<{$}|>{$}c<{$}|>{$}c<{$}|>{$}c<{$}|>{$}c<{$}|>{$}c<{$}|>{$}c<{$}|>{$}c<{$}|}
\hline
n & 1 & 2 & 3 & 4 & 5 & 6 & 7\\ \hline
W_{n} & a & ba & aba & baba & babac & babdba & abdbaba \\ \hline
M_{\mathbf{r}}(n) & 1 & 2 & 3 & 4 & 3 & 4 & 5  \\ \hline
\end{tabular}
\caption{The initial values for Lemma \ref{lem:maxfactor}}\label{tab:1}
\end{table}

\emph{Step 2.} Assuming that $(\ref{l:r2})$, $(\ref{l:r3})$ and $(\ref{l:r1})$ hold for $n< 4k$ $(k\geq 2)$, we will prove the results for
$4k\leq n <4(k+1)$. The proof in this step will be separated into the following two cases.

\noindent\textbf{Case 1:} $\Delta M_{\mathbf{r}}(k)=1$. In this case,  the induction hypotheses $(\ref{l:r2})$, $(\ref{l:r3})$ and
$(\ref{l:r1})$ yield the  following facts:
\begin{enumerate}[\indent ({1}a)]
\item $W_{k}\in\mathcal{M}_{\mathbf{s}}(k)$ and $W_{k+1}\in\mathcal{M}_{\mathbf{s}}(k+1)$;
\item $db\sigma^{2}(W_{k})$ and $db\sigma^{2}(W_{k+1})$ are factors of $\mathbf{s}$;
\item either $a\triangleright W_{k}$ or $c\triangleright W_{k}$ holds, and $a\triangleright W_{k+1}$.
\end{enumerate}
(In the last statement (1c), we can exclude the case $c\triangleright W_{k+1}$ since $\Delta M_{\mathbf{r}}(k)=1$. In fact, if $W_{k+1}=Wc$,
then
\[M_{\mathbf{r}}(k+1)=S(W_{k+1})=S(W)+S(c)=S(W)-1\leq M_{\mathbf{r}}(k)-1,\]
which contradicts the assumption $\Delta M_{\mathbf{r}}(k)=M_{\mathbf{r}}(k+1)-M_{\mathbf{r}}(k)=1$.)
Now, by (\ref{eq:maxW}) and (1b), we have  $dW_{n}$ is a factor of $\mathbf{s}$ for $4k\leq n\leq 4k+2$ and $bW_{4k+3}$ is a factor of
$\mathbf{s},$ which implies that $(\ref{l:r2})$ holds for $4k\leq n<4(k+1)$. Moreover, this also implies
\begin{equation}\label{eq:2b}
W_{n} \text{ is a factor of }\mathbf{s} \text{ for }4k\leq n < 4(k+1).
\end{equation}
Since by the fact (1c), we have $ac\triangleright \sigma^{2}(W_{k})$ and $abac=\sigma^{2}(a)\triangleright\sigma^{2}(W_{k+1})$. Therefore
(\ref{eq:maxW}) gives
\begin{equation}\label{eq:2c}
a\triangleright W_{4k},~c\triangleright W_{4k+1},~ a\triangleright W_{4k+2} \text{ and } a\triangleright W_{4k+3},
\end{equation}
which prove $(\ref{l:r3})$.
Now, by (\ref{eq:maxW}), (\ref{eq:2c}), (1a) and Lemma \ref{lem:iterate}, we have
\begin{equation}\label{eq:2rec}
\left\{\begin{array}{ccl}
S(W_{4k}) & = & S(b)+ S(\sigma^{2}(W_{k}))-S(c) = 2M_{\mathbf{r}}(k)+2,\\
S(W_{4k+1}) & = & S(b)+ S(\sigma^{2}(W_{k})) = 2M_{\mathbf{r}}(k)+1,\\
S(W_{4k+2}) & = & S(b)+ S(\sigma^{2}(W_{k+1}))-S(bac)\\
& = & 2M_{\mathbf{r}}(k+1)=M_{\mathbf{r}}(k)+M_{\mathbf{r}}(k+1)+1,\\
S(W_{4k+3}) & = & S(\sigma^{2}(W_{k+1}))-S(c) = 2M_{\mathbf{r}}(k+1)+1.\\
\end{array}
\right.
\end{equation}
By (\ref{eq:2b}), (\ref{eq:2rec}) and Lemma \ref{lem:upperbound}, we have $W_{n}\in\mathcal{M}_{\mathbf{s}}(n)$ for $4k\leq n < 4(k+1)$ which
is $(\ref{l:r1})$.

\noindent\textbf{Case 2:} $\Delta M_{\mathbf{r}}(k)=-1$.  In this case, we shall first assert that $dW_{k}$ is a factor of $\mathbf{s}$. By the
induction hypothesis $(\ref{l:r2})$, we only need to show that $bW_{k}$ can not be a factor of $\mathbf{s}$. If this is not the case, then
\[M_{\mathbf{r}}(k+1)\geq S(bW_{k})=1+S(W_{k})=1+M_{\mathbf{r}}(k)\]
where the last equality follows from $(\ref{l:r1})$. Then we have $\Delta M_{\mathbf{r}}(k)=M_{\mathbf{r}}(k+1)-M_{\mathbf{r}}(k)\geq 1$ which
is a contradiction. Therefore, applying the induction hypotheses $(\ref{l:r2})$, $(\ref{l:r3})$ and $(\ref{l:r1})$, we have
\begin{enumerate}[\indent (2a)]
\item $W_{k}\in\mathcal{M}_{\mathbf{s}}(k)$ and $W_{k+1}\in\mathcal{M}_{\mathbf{s}}(k+1)$;
\item $dcdb\sigma^{2}(W_{k})$ and $b\sigma^{2}(W_{k+1})$ are factors of $\mathbf{s}$;
\item $ac\triangleright \sigma^{2}(W_{k})$ and $ac\triangleright \sigma^{2}(W_{k+1})$.
\end{enumerate}
By (\ref{eq:maxW}) and (2b), we have
\begin{equation}\label{eq:1bb}
dW_{n} \text{ is a factor of } \mathbf{s} \text{ for  } 4k\leq n\leq 4k+2
\end{equation}
and $bW_{4k+3}$ is a factor of $\mathbf{s}$, which prove $(\ref{l:r2})$. These imply that
\begin{equation}\label{eq:1b}
W_{n} \text{ is a factor of }\mathbf{s} \text{ for }4k\leq n < 4(k+1).
\end{equation}
Combing (2c) and (\ref{eq:maxW}), $(\ref{l:r3})$ holds for $4k\leq n < 4(k+1)$.
Now, by (\ref{eq:maxW}), (2a), (2c) and Lemma \ref{lem:iterate}, we have
\begin{equation}\label{eq:2rec2}
\left\{\begin{array}{ccl}
S(W_{4k}) & = & S(b)+ S(\sigma^{2}(W_{k}))-S(c) = 2M_{\mathbf{r}}(k)+2,\\
S(W_{4k+1}) & = & S(b)+ S(\sigma^{2}(W_{k})) = 2M_{\mathbf{r}}(k)+1,\\
S(W_{4k+2}) & = & S(cbd)+ S(\sigma^{2}(W_{k}))-S(c)\\
& = & 2M_{\mathbf{r}}(k)=M_{\mathbf{r}}(k)+M_{\mathbf{r}}(k+1)+1,\\
S(W_{4k+3}) & = & S(\sigma^{2}(W_{k+1}))-S(c) = 2M_{\mathbf{r}}(k+1)+1.\\
\end{array}
\right.
\end{equation}
By (\ref{eq:1b}), (\ref{eq:2rec2}) and Lemma \ref{lem:upperbound}, we have $W_{n}\in\mathcal{M}_{\mathbf{s}}(n)$ for $4k\leq n < 4(k+1)$ which
is $(\ref{l:r1})$. The proof is completed.
\end{proof}

\medskip
For $\mathbf{r^{\prime}}$, let $(\widetilde{W}_{n})_{n\geq 1}$ be the sequence of words defined by   $\widetilde{W}_{1}=c$,
$\widetilde{W}_{2}=ca$, $\widetilde{W}_{3}=cac$ and
\begin{equation}\label{eq:maxW2}\left\{
\begin{array}{ccl}
   {\widetilde{W}_{4n}} &=& d^{-1}{\sigma ^2}({\widetilde{W}_n}){a},  \\
   {\widetilde{W}_{4n + 1}}& =& {\sigma ^2}({\widetilde{W}_n})a,  \\
   {\widetilde{W}_{4n + 2}} &= &
   \begin{cases}
      (dca)^{-1}{\sigma ^2}(\widetilde{W}_{n+1}){a}& \text{ if } \Delta M_{\mathbf{r^{\prime}}}(n) = 1, \\
      d^{-1}{\sigma ^2}({\widetilde{W}_n}){abd} & \text{ if } \Delta M_{\mathbf{r^{\prime}}}(n) =  - 1,
      \end{cases}\\
   {\widetilde{W}_{4n + 3}} &=& d^{-1}{\sigma ^2}({\widetilde{W}_{n + 1}}).
\end{array}\right.
\end{equation}

\begin{lemma}\label{lem:maxfactor2}
Let $(\widetilde{W}_{n})_{n\geq 1}\subset\{a,b,c,d\}^{*}$ given by (\ref{eq:maxW2}). Then for any $n\geq 1$,
\begin{enumerate}[\indent$(i)$]
\item either $\widetilde{W}_{n}a\prec\mathbf{s}$ or $\widetilde{W}_{n}b\prec\mathbf{s}$ holds;\label{l:r2new}
\item either $c\triangleleft \widetilde{W}_{n}$ or $d\triangleleft \widetilde{W}_{n}$ holds; \label{l:r3new}
\item $\widetilde{W}_{n}\in\mathcal{M}'_{\mathbf{s}}(n)$.\label{l:r1new}
\end{enumerate}
\end{lemma}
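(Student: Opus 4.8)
The plan is to mirror the proof of Lemma \ref{lem:maxfactor} exactly, replacing left-extension by right-extension, suffixes by prefixes, and $\tau$ by $\tau'$ throughout. The key observation is that the coding $\mu': a\mapsto a,\ b\mapsto c,\ c\mapsto b,\ d\mapsto d$ (or whichever involution conjugates $\sigma$ and swaps the roles needed for $\mathbf{r}'$) together with a mirror/reversal map relates the structure of the extremal factors of $\mathbf{r}'$ to those of $\mathbf{r}$. First I would prove the analogue of Lemma \ref{lem:iterate} for $S'$ on the specific letters that appear in \eqref{eq:maxW2}: since $S'=\mathrm{DS}\circ\tau'$ is a monoid morphism, it suffices to check $S'(\sigma^2(x))=2S'(x)$ for $x\in\{a,b,c,d\}$, which is immediate from $\sigma^2:a\mapsto abac,\ b\mapsto abdb,\ c\mapsto dcac,\ d\mapsto dcdb$ and $\tau':a\mapsto 1,b\mapsto -1,c\mapsto 1,d\mapsto -1$ (giving $S'(abac)=1-1+1+1=2$, etc.). I would also record the analogue of Lemma \ref{lem:sum} for $\Sigma_{\mathbf{r}'}$, which by the remark after Lemma \ref{lem:upperbound} is what underlies the inequalities for $M_{\mathbf{r}'}$, and which has already been invoked there; these are the only external facts needed beyond Lemma \ref{lem:upperbound}.

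Next I would carry out the induction on $n$, proving $(\ref{l:r2new})$, $(\ref{l:r3new})$, $(\ref{l:r1new})$ simultaneously, in two stages. \emph{Base case:} for $n<8$ one lists $\widetilde{W}_1,\dots,\widetilde{W}_7$ explicitly (e.g.\ $\widetilde{W}_1=c$, $\widetilde{W}_2=ca$, $\widetilde{W}_3=cac$, $\widetilde{W}_4=\sigma^2(c)a=dcaca$ up to the prescribed edge-trimming, and so on), checks that each is a factor of a suitable $\sigma^k(a)$ — conveniently all sitting inside $\sigma^2(dba)$ or a similar short block — reads off the required prefix/suffix-extension data, and verifies $S'(\widetilde{W}_n)=M_{\mathbf{r}'}(n)$ using the base values $M_{\mathbf{r}'}(1)=1,M_{\mathbf{r}'}(2)=2,M_{\mathbf{r}'}(3)=3$ together with Lemma \ref{lem:upperbound}. \emph{Inductive step:} assuming the three claims for all indices $<4k$ with $k\ge 2$, split on the sign of $\Delta M_{\mathbf{r}'}(k)$. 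In each case, from $(\ref{l:r2new})$ and $(\ref{l:r3new})$ for $k$ and $k+1$ one deduces: which letter ($c$ or $d$) the words $\sigma^2(\widetilde{W}_k),\sigma^2(\widetilde{W}_{k+1})$ begin with (since $\sigma^2$ maps $c,d$ to words starting with $d$, and $a$ to $abac$), hence that the trimmed words $\widetilde{W}_{4k+j}$ in \eqref{eq:maxW2} are genuinely factors of $\mathbf{s}$ and have the claimed prefixes; which letter they may be right-extended by inside $\mathbf{s}$ (driving $(\ref{l:r2new})$); and, via the $S'$-morphism property and the just-proved Lemma-\ref{lem:iterate}-analogue, the exact values $S'(\widetilde{W}_{4k+j})$, which match the right-hand sides of the recurrences in Theorem \ref{thm:abelcomp}. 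Combining with the upper bounds of Lemma \ref{lem:upperbound} for $M_{\mathbf{r}'}$ forces $S'(\widetilde{W}_{4k+j})=M_{\mathbf{r}'}(4k+j)$, i.e.\ $(\ref{l:r1new})$, for $0\le j\le 3$.

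The two subcases differ only in the middle line $\widetilde{W}_{4n+2}$ of \eqref{eq:maxW2}, exactly as in Lemma \ref{lem:maxfactor}: when $\Delta M_{\mathbf{r}'}(k)=1$ one uses that $\widetilde{W}_{k+1}$ cannot begin with $d$ (else $\widetilde{W}_{k+1}=d\widetilde{W}$ would give $M_{\mathbf{r}'}(k+1)=S'(\widetilde{W})-1\le M_{\mathbf{r}'}(k)-1$, contradicting $\Delta M_{\mathbf{r}'}(k)=1$), so $c\triangleleft\widetilde{W}_{k+1}$ and the $(dca)^{-1}\cdots a$ trimming is legitimate; when $\Delta M_{\mathbf{r}'}(k)=-1$ one shows dually that $\widetilde{W}_k b$ (the "other" right extension) is impossible, so $\widetilde{W}_k a\prec\mathbf{s}$, and the $d^{-1}\cdots abd$ form works. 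I expect the main obstacle to be purely bookkeeping: getting the one-letter boundary corrections consistent — i.e.\ tracking precisely which single letters are deleted or appended at each end so that the digit-sum accounting $S'(\widetilde{W}_{4k+j})$ lands on the target value and no claimed factor runs off the edge of an available block of $\mathbf{s}$. There is no new idea beyond Lemma \ref{lem:maxfactor}; the work is to verify that the asymmetry between $\tau$ and $\tau'$ (which sends $b,c$ to opposite signs) is absorbed correctly by the modified definitions \eqref{eq:maxW2}, and this can be made rigorous by the same case analysis transported through the mirror symmetry, with the details left to the reader as the authors do for the analogous steps.
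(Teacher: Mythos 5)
Your overall strategy is exactly what the paper intends: the published proof is just the sentence ``similar to Lemma \ref{lem:maxfactor}'', and the intended argument is the mirrored induction you describe (base case $n<8$ with the explicit words, here sitting inside $\sigma^{2}(cac)=dcacabacdcac$ rather than $\sigma^{2}(dba)$; simultaneous induction on the three claims; case split on the sign of $\Delta M_{\mathbf{r^{\prime}}}(k)$; $S^{\prime}(\sigma^{2}(x))=2S^{\prime}(x)$ from Lemma \ref{lem:iterate}; and the upper bounds of Lemma \ref{lem:upperbound} to force $S^{\prime}(\widetilde{W}_{4k+j})=M_{\mathbf{r^{\prime}}}(4k+j)$). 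Your Case 1 exclusion (if $\widetilde{W}_{k+1}=d\widetilde{W}$ then $M_{\mathbf{r^{\prime}}}(k+1)=S^{\prime}(\widetilde{W})-1\leq M_{\mathbf{r^{\prime}}}(k)-1$, contradicting $\Delta M_{\mathbf{r^{\prime}}}(k)=1$) is also the right one. The appeal to a coding $\mu^{\prime}$ plus reversal is unnecessary decoration and plays no role in the argument, which is fine since you do not rely on it.

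However, your Case 2 ($\Delta M_{\mathbf{r^{\prime}}}(k)=-1$) is inverted, and as written the step fails. Under $\tau^{\prime}$ we have $a\mapsto 1$ and $b\mapsto -1$, so the extension that must be ruled out is $\widetilde{W}_{k}a$: if $\widetilde{W}_{k}a\prec\mathbf{s}$ then $M_{\mathbf{r^{\prime}}}(k+1)\geq S^{\prime}(\widetilde{W}_{k})+1=M_{\mathbf{r^{\prime}}}(k)+1$, contradicting $\Delta M_{\mathbf{r^{\prime}}}(k)=-1$; by induction hypothesis $(\ref{l:r2new})$ this leaves $\widetilde{W}_{k}b\prec\mathbf{s}$. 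Your version (``$\widetilde{W}_{k}b$ is impossible, so $\widetilde{W}_{k}a\prec\mathbf{s}$'') has no justification, since a $b$-extension lowers $S^{\prime}$ and is perfectly consistent with $\Delta M_{\mathbf{r^{\prime}}}(k)=-1$; worse, the conclusion would not serve the construction. The whole point of establishing $\widetilde{W}_{k}b\prec\mathbf{s}$ is that $\sigma^{2}(\widetilde{W}_{k}b)=\sigma^{2}(\widetilde{W}_{k})\,abdb$ is then a factor of $\mathbf{s}$, which legitimizes $\widetilde{W}_{4k+2}=d^{-1}\sigma^{2}(\widetilde{W}_{k})abd$ in \eqref{eq:maxW2} and gives $S^{\prime}(\widetilde{W}_{4k+2})=2M_{\mathbf{r^{\prime}}}(k)=M_{\mathbf{r^{\prime}}}(k)+M_{\mathbf{r^{\prime}}}(k+1)+1$; if instead the letter following $\widetilde{W}_{k}$ were $a$, one would only get $\sigma^{2}(\widetilde{W}_{k})\,abac$, which does not supply the required suffix $abd$. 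Once this case is corrected (and the minor slip in the base list fixed: $\widetilde{W}_{4}=d^{-1}\sigma^{2}(c)a=caca$, while $dcaca$ is $\widetilde{W}_{5}$), your proof coincides with the intended one.
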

\begin{proof}
The proof of this lemma is similar to Lemma \ref{lem:maxfactor}.
\end{proof}

For any  $k$-automatic sequence $\mathbf{w}=w(0)w(1)\cdots\in\{-1,1\}^{\mathbb{N}},$ %following from Proposition \ref{lem:aMm},  the
the regularity of the maximal partial sums $(M_{\mathbf{w}}(n))_{n\geq 1}$ and the minimal partial sums $(m_{\mathbf{w}}(n))_{n\geq 1}$ implies the
regularity of the abelian complexity $(\rho_{\mathbf{w}}(n))_{n\geq 1}.$
By proving the same result as Lemma \ref{lem:sum}, one can show that the double sequence $(\Sigma_{\mathbf{w}}(i,n))_{i\geq 0,n\geq
1}$ is 2-dimensional $k$-regular.  In fact,  it is not hard to show that  $(\Sigma_{\mathbf{w}}(i,n))_{i\geq 0}$ is $k$-automatic for any
fixed $n\geq 1$, and  $(\Sigma_{\mathbf{w}}(i,n))_{n\geq 1}$ is $k$-regular for any fixed $i\geq 0$. Moreover, Theorem \ref{thm:abelcomp}
and Lemma \ref{lem:Mm} show that $(\max_{i\geq 0}\Sigma_{\bm{w}}(i,n))_{n\geq 1}$  and $(\min_{i\geq 0}\Sigma_{\mathbf{w}}(i,n))_{n\geq
1}$ are still $k$-regular when $\mathbf{w}$ is the Rudin-Shapiro sequence $\mathbf{r}$ or its related sequence
$\mathbf{r}^{\prime}$, which implies the regularity of the abelian complexity function $(\rho_{\mathbf{r}}(n))_{n\geq 0}$ and $(\rho_{\mathbf{r}^{\prime}}(n))_{n\geq 0}.$ It is natural to
ask  whether $(\max_{i\geq 0}\Sigma_{\mathbf{w}}(i,n))_{n\geq 1}$  and $(\min_{i\geq 0}\Sigma_{\mathbf{w}}(i,n))_{n\geq 1}$ are always
$k$-regular for general $k$-automatic sequences $\mathbf{w}$ over $\{-1,1\}.$

\section{Box dimension of $\lambda(x)$}	
Let $M(x):=M(\lfloor x\rfloor)$ ($x>0$) be the continuous version of the maximal digit sum function, and $\rho(x)=M(x)+1$. Now we study the following limit function:
\begin{equation}\label{def:lamda}
\lambda(x):= \displaystyle{\lim_{k\to \infty}}\frac{\rho( 4^k x)}{\sqrt{4^k x}}.
\end{equation}
From the above definition, providing the limit exists, it is easy to see that $\lambda(x)$ is self-similar in the sense that for any $x>0$,
 	\[  \lambda(4x)=\lambda(x).\]
The existence of the limit in \eqref{def:lamda} follows from the same argument in \cite[Theorem $1$]{BEM}. For completeness, we give the details in the
following Proposition \ref{lem:lambdarho}.

Denote the $4$-adic expansion of a real positive number $x>0$ by
\begin{equation}\label{eq:expansion}
\sum_{j=0}^{\infty}x_{j}4^{-j}
\end{equation}
where $x_{0}\in\mathbb{N}$ and $x_{j}\in\{0,1,2,3\}$ for all $j\geq 1$.  In the expansion \eqref{eq:expansion}, we always assume that there are
infinitely many $j$ such that $x_{j}\neq 3$. Let
\[a_{j}(x):=\left\{
\begin{aligned}
& -1, & \text{if } 4^{j}x<1,\\
&\Delta M(\lfloor 4^{j}x\rfloor-1), & \text{otherwise,}
\end{aligned}
\right.\]
and
\[d(y)=\left\{\begin{array}{cl} 1 & \text{ if } y = 0 \text{ or } 2,\\
0 & \text{ if } y = 1,\\
2 & \text{ if } y = 3.\end{array}\right.\]
\begin{proposition}\label{lem:lambdarho}
The limit (\ref{def:lamda}) exists for all $x>0$, and for any $x>0$ it satisfies
\begin{equation}\label{eq:lamda:detail}
\lambda(x)=\frac{\rho(x)+a(x)}{\sqrt{x}}
\end{equation}
where $a(x) := \sum\limits_{j = 1}^\infty  d ({x_j}){a_j}(x) 2^{ - j}.$ Moreover, for any positive integer $n$, \[\lambda(n)=(\rho(n)+1)/\sqrt{n}.\]
\end{proposition}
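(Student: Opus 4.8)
The plan is to prove the existence of the limit in \eqref{def:lamda} and the formula \eqref{eq:lamda:detail} by following the telescoping/sum-of-corrections argument of Brillhart, Erd\H{o}s and Morton, adapted to the present recurrences. First I would fix $x>0$ and look at how $\rho(4^{k}x)=M(\lfloor 4^{k}x\rfloor)+1$ changes when $k$ increases by one. Writing $n_{k}:=\lfloor 4^{k}x\rfloor$, one has $n_{k+1}=4n_{k}+x_{k+1}'$ for the appropriate digit, and Theorem \ref{thm:abelcomp} together with Corollary \ref{cor:abeldiff} lets us compare $M(4n_{k}+r)$ with $2M(n_{k})$ (when $r=0,1$) or $2M(n_{k}+1)$ (when $r=2,3$), the discrepancy being controlled by a single value $\Delta M(n_{k}-1)=a_{k}(x)$ weighted by a digit-dependent constant. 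Concretely, from $M(4n)=2M(n)+2$, $M(4n+1)=2M(n)+1$, $M(4n+2)=M(n)+M(n+1)+1=2M(n)+\Delta M(n)+1$ and $M(4n+3)=2M(n+1)+1=2M(n)+2\Delta M(n)+1$, one extracts a clean identity of the shape
\[
\frac{M(n_{k+1})+c}{2^{\,k+1}}=\frac{M(n_{k})+c}{2^{\,k}}+\frac{d(x_{k+1})\,a_{k+1}(x)}{2^{\,k+1}}
\]
for a suitable absolute constant $c$ (here $c=1$, matching $\rho=M+1$), where $d(\cdot)$ is exactly the digit weight in the statement.

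Next I would iterate this identity from a fixed base index $k_{0}$ (large enough that $4^{k_{0}}x\ge 1$, so all $a_{j}(x)$ are genuine first differences) up to $k$, obtaining
\[
\frac{\rho(4^{k}x)}{2^{\,k}}=\frac{\rho(4^{k_{0}}x)}{2^{\,k_{0}}}+\sum_{j=k_{0}+1}^{k}\frac{d(x_{j})\,a_{j}(x)}{2^{\,j}}.
\]
Since $|a_{j}(x)|\le 1$ and $|d(x_{j})|\le 2$, the tail $\sum_{j>k}2^{-j}|d(x_{j})a_{j}(x)|\to 0$, so the series converges absolutely and the left-hand side converges as $k\to\infty$. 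Dividing by $\sqrt{x}$ and using $2^{k}=\sqrt{4^{k}x}/\sqrt{x}$ gives
\[
\lambda(x)=\lim_{k\to\infty}\frac{\rho(4^{k}x)}{\sqrt{4^{k}x}}=\frac{1}{\sqrt{x}}\Bigl(\rho(x)+a(x)\Bigr),
\]
after checking that the finite initial segment of the series plus the starting term $\rho(4^{k_{0}}x)/2^{k_{0}}$ collapses back, via the same identity run downward from $k_{0}$ to $0$, to exactly $\rho(\lfloor x\rfloor)+\sum_{j\ge 1}d(x_{j})a_{j}(x)2^{-j}=\rho(x)+a(x)$; the role of the definition $a_{j}(x)=-1$ when $4^{j}x<1$ is precisely to make the downward iteration valid across the indices where the integer part is $0$.

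Finally, for the last clause I would specialize $x=n\in\mathbb{Z}_{>0}$. Then the $4$-adic expansion of $n$ is finite, so $x_{j}=0$ for all large $j$, hence $d(x_{j})=1$ for those $j$, while $a_{j}(n)=\Delta M(\lfloor 4^{j}n\rfloor-1)=\Delta M(4^{j}n-1)$; by Corollary \ref{cor:abeldiff} one has $\Delta M(4m)=-1$, i.e. $\Delta M(4^{j}n-1)=-1$ for $j\ge 1$ (since $4^{j}n-1=4(4^{j-1}n)-1$, hmm — rather $4^{j}n$ is a multiple of $4$, so $4^{j}n-1\equiv 3\pmod 4$ and $\Delta M(4^{j}n-1)$ is $\Delta M(4m+3)=-1$). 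Thus only finitely many terms of $a(n)$ are nonzero except a geometric tail of terms equal to $1\cdot(-1)\cdot 2^{-j}$; I would sum the finite part together with $\rho(n)$ and check it telescopes — using $M(4m+3)=2M(m+1)+1$ repeatedly — to the value $\rho(n)+1$, giving $\lambda(n)=(\rho(n)+1)/\sqrt{n}$. The main obstacle is bookkeeping: getting the constant $c$ and the digit weights $d(\cdot)$ to come out exactly as stated requires carefully matching the four cases of Theorem \ref{thm:abelcomp} against $\lfloor 4^{k+1}x\rfloor=4\lfloor 4^{k}x\rfloor+x_{k+1}$ (note $\lfloor 4^{k+1}x\rfloor$ need not equal $4\lfloor 4^{k}x\rfloor$ plus the "$(k+1)$-st $4$-adic digit of $x$" unless one is careful about the fractional part), and handling the boundary indices where $4^{j}x<1$; once the per-step identity is pinned down, the convergence and the integer specialization are routine.
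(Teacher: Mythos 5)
Your proposal follows essentially the same route as the paper: the per-step identity $M(4n+i)=2M(n)+1+d(i)\,\Delta M(4n+i-1)$ extracted from Theorem \ref{thm:abelcomp} and Corollary \ref{cor:abeldiff}, telescoped over $k$ with the convention $a_j(x)=-1$, $d(0)=1$ covering the indices where $4^jx<1$, and absolute convergence of the weighted series; your displayed identity with $c=1$ is exactly the paper's recursion rewritten for $\rho=M+1$, and the floor-digit identity $\lfloor 4^{k+1}x\rfloor=4\lfloor 4^kx\rfloor+x_{k+1}$ that you flag does hold because the expansion \eqref{eq:expansion} excludes tails of all $3$'s. The one concrete error is in the integer specialization: you assert $\Delta M(4m+3)=-1$, but Corollary \ref{cor:abeldiff} says $\Delta M(4n)=-\Delta M(4n+3)=-1$, i.e.\ $\Delta M(4m+3)=+1$ (consistent with $M(4m+3)=2M(m+1)+1$ and $M(4m+4)=2M(m+1)+2$). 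Since $4^jn-1\equiv 3\pmod 4$, the correct value is $a_j(n)=+1$ for every $j\geq 1$, so $a(n)=\sum_{j\geq1}2^{-j}=1$ and $\lambda(n)=(\rho(n)+1)/\sqrt{n}$ drops out at once, with no further telescoping needed; if one carried your stated sign through, the series would sum to $-1$ and the final formula would wrongly read $(\rho(n)-1)/\sqrt{n}$, contradicting the claim you set out to prove. Fix that sign and your argument matches the paper's proof.
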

\begin{proof}
By Theorem \ref{thm:abelcomp} and Corollary \ref{cor:abeldiff}, we have \[M(4n+i)=2M(n)+1+d(i)\Delta M(4n+i-1)\] for all $n\geq 1$
and $i=0,1,2,3$.
Let $N$ be the smallest integer such that $4^{N}x\geq 1$. Then, for any $k\geq N$,
\begin{align*}
M(4^{k}x)&=M(\lfloor 4^{k}x\rfloor) = M(4\lfloor 4^{k-1}x\rfloor +x_{k})\\
& = 2M(\lfloor 4^{k-1}x\rfloor)+1+d(x_{k})\Delta M(\lfloor 4^{k}x\rfloor -1)\\
& = 2M(4^{k-1}x)+1+d(x_{k})a_{k}(x).
\end{align*}
For $1\leq k < N$, $d(x_{k})=d(0)=1$ and $a_{k}(x)=-1$. Thus, we also have
\begin{align*}
M(4^{k}x)&=0=1+(-1) \\
& = 1+ d(x_{k})a_{k}(x)\\
& = 2M(4^{k-1}x)+1+d(x_{k})a_{k}(x).
\end{align*}
By induction, the above equation yields \[M(4^{k}x)=2^{k}M(x)+\sum_{j=1}^{k}d(x_{j})a_{j}(x)2^{k-j}+(2^{k}-1).\]
Now, by Lemma \ref{lem:rho}
\[\frac{\rho ({4^k}x)}{\sqrt {{4^k}x} } = \frac{M({4^k}x) + 1}{\sqrt {{4^k}x} } = \frac{\rho (x)}{\sqrt x } + \frac{1}{\sqrt x }\sum\limits_{j
= 1}^k d({x_j})a_j(x)2^{ - j}.\]
Letting $k\to \infty$ and noticing that the series in (\ref{eq:lamda:detail}) converges absolutely, we obtain (\ref{eq:lamda:detail}).

When $x=n\in\mathbb{N^{+}}$, $x_{0}=n$, $x_j=0$ and $a_{j}=4^{j}n-1$ for all $j\geq 1$. Then the infinite sums in (\ref{eq:lamda:detail}) turns
out to be
\[\sum_{j=1}^{\infty}d(x_{j})a_{j}(x)2^{-j}=\sum_{j=1}^{\infty}\Delta M(4^{j}n-1)2^{-j}=1\]
where the last equality holds by using Corollary \ref{cor:abeldiff}. Applying the above equation to (\ref{eq:lamda:detail}), we complete
the proof.
\end{proof}

\subsection{Auxiliary lemmas.}
Let $\delta > 0$. For any $m_1,m_2 \in \mathbb{Z}$,  we call the following square
\[ [m_1\delta, (m_1+1)\delta] \times  [m_2\delta, (m_2+1)\delta]\]
a $\delta$-mesh of $\mathbb{R}^2.$
Let $F \subset \mathbb{R}^2$  be a non-empty bounded set in $\mathbb{R}^{2}$, and $N_{\delta}(F)$ be the number of $\delta$-meshes that intersect
$F$. The upper and lower box dimension are defined by
\[\overline{\dim}_B F:=\overline{\lim}_{\delta\to 0}\frac{\log N_{\delta}(F)}{-\log
\delta} \text{  and  } \underline{\dim}_B F:=\overline{\lim}_{\delta\to 0}\frac{\log N_{\delta}(F)}{-\log \delta}\] respectively. If
$\overline{\dim}_B F= \underline{\dim}_B F,$ then the common value denoted by $\dim_B F$,  is the box dimension of $F$. For more detail, see \cite{F04}.

Now, we will prove some auxiliary lemmas which are used in the calculation of the box dimension of the function $\lambda(x)$. For any $k\geq 1$ and $0\leq z< 4^{k}$ where $z\in\mathbb{N}$. let
\[I_{k}(z):=[z4^{-k},(z+1)4^{-k}).\]
Then $[0,1)=\bigcup_{0\leq z<4^{k}}I_{k}(z)$. Denote the $4$-adic expansion of $z4^{-k}$ by  \[\frac{z}{4^{k}}=\sum_{j=1}^{k}z_{j}4^{-j}.\]
If $y=\sum_{j=1}^{\infty}y_{j}4^{-j}\in I_{k}(z)$, then $y_{i}=z_{i}$ for $i=1,~2,~\cdots,~k$.

First, we will determine the difference of values of $a(\cdot)$ at the end points of $4$-adic interval $I_{k}(z)$.
\begin{lemma}\label{dim:lem:1}
Let $k\geq 1$ and $z\in\mathbb{N}$ with $1\leq z<4^{k}$. Then
\[a(z4^{-k})-a((z+1)4^{-k})=\begin{cases}-2^{-k} & \mathrm{if}~ z\leq 4^{k}-2 \\
1-2^{-k} & \mathrm{if }~ z=4^{k}-1.\end{cases}\]
\end{lemma}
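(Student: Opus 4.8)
The quantity $a(y)=\sum_{j\ge 1} d(y_j)a_j(y)2^{-j}$ depends on $y$ only through its $4$-adic digits $y_1,y_2,\dots$, and on the values $a_j(y)=\Delta M(\lfloor 4^j y\rfloor-1)$ (for $4^jy\ge 1$). My plan is to compare the two series term by term, exploiting that the endpoints $z4^{-k}$ and $(z+1)4^{-k}$ of the $4$-adic interval $I_k(z)$ share their first digits up to a carry. First I would set $y=z4^{-k}$ with digits $y_1,\dots,y_k$ (and $y_j=0$ for $j>k$), and let $y'=(z+1)4^{-k}$. Writing $z+1$ in base $4$ produces a carry: if $z\not\equiv 3\pmod 4$ then only the last digit changes ($y_k'=y_k+1$, all others equal); more generally, if $z\equiv 4^\ell-1\pmod{4^{\ell+1}}$ say with $z\le 4^k-2$, then $y_j'=y_j$ for $j\le k-\ell-1$, $y_{k-j}'=0$ for $0\le j<\ell$, and $y_{k-\ell}'=y_{k-\ell}+1$. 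I would handle the carry by induction on $\ell$, or equivalently first prove the base case $z\equiv\!\!\!\!/\ 3\pmod 4$ and then iterate.

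\textbf{Key steps.} For $j>k$ one has $4^jz4^{-k}=z4^{j-k}$, an integer, so $a_j(z4^{-k})=\Delta M(z4^{j-k}-1)$; by Corollary~\ref{cor:abeldiff}, $\Delta M(4m-1)=-1$ for $m\ge 1$, hence $a_j(z4^{-k})=-1$ for all $j>k$, and since $d(0)=1$ these contribute $\sum_{j>k}(-1)2^{-j}=-2^{-k}$ to $a(z4^{-k})$. The same computation applies to $y'=(z+1)4^{-k}$ when $z\le 4^k-2$ (so $z+1\le 4^k-1$ and $(z+1)4^{j-k}$ is still a multiple of $4$ for $j>k$... in fact $z+1$ need not be divisible by $4$, but $4^{j-k}(z+1)$ is, since $j-k\ge 1$), giving the same tail contribution $-2^{-k}$. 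So the tails $j>k$ cancel in the difference. For $1\le j\le k$ I claim $d(y_j)a_j(y)=d(y_j')a_j(y')$. Indeed for $j\le k$, $\lfloor 4^j y\rfloor=\lfloor 4^j y'\rfloor$ whenever $y,y'$ lie in the same interval $I_j(\cdot)$, i.e.\ whenever their first $j$ digits agree; combined with $d(y_j)=d(y_j')$ this gives term-by-term equality precisely on the range of indices unaffected by the carry. The one index where a digit genuinely changes, together with the (few) indices set to $0$ by the carry, is exactly where the telescoping/induction on $\ell$ must be carried out: one checks that $d(y_k)a_k(y)+\dots$ over the carried block equals the corresponding block for $y'$, using $d(0)a_j=-d(0)=-1$ on the zeroed digits and the recursion $\Delta M(4n+i-1)$ from Corollary~\ref{cor:abeldiff} to match the changed digit. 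Finally, for $z=4^k-1$, $y'=(z+1)4^{-k}=1$, so $a(1)=\sum_{j\ge 1}\Delta M(4^j-1)2^{-j}=1$ by the computation in the proof of Proposition~\ref{lem:lambdarho}; meanwhile $a(y)$ for $y=(4^k-1)4^{-k}$ is computed as above to be $1-2\cdot 2^{-k}$ or similar, and the difference works out to $1-2^{-k}$ — this endpoint case I would just compute directly since both quantities are explicit.

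\textbf{Main obstacle.} The delicate point is the bookkeeping of the carry in $z\mapsto z+1$: when $z$ ends in a run of $\ell$ threes, the last $\ell+1$ digits of $y'$ differ from those of $y$, and for each such index $j$ I must verify $d(y_j)a_j(y)-d(y_j')a_j(y')$ contributes nothing net to the difference beyond the claimed $-2^{-k}$. The cleanest route is induction on $\ell$ (the length of the trailing run of $3$'s), reducing the general case to the single-digit-change case $z\equiv\!\!\!\!/\ 3\pmod 4$; in that base case one has $y_k'=y_k+1$ with $y_k\in\{0,1,2\}$ and must check $d(y_k)\Delta M(\lfloor 4^k y\rfloor-1)=d(y_k+1)\Delta M(\lfloor 4^k y'\rfloor-1)$, where $\lfloor 4^k y'\rfloor=\lfloor 4^k y\rfloor+1$, using $d(0)=d(2)=1$, $d(1)=0$, $d(3)=2$ and the explicit values of $\Delta M(4n+i-1)$ from Corollary~\ref{cor:abeldiff} (the cases $\Delta M(4n)=-\Delta M(4n+3)=-1$ and $\Delta M(4n+1)=\Delta M(4n+2)=\Delta M(n)$). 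This is a finite case check, but it is the crux and must be done carefully; everything else is the clean cancellation described above.
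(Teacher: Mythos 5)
Your overall strategy (compare the two series digit by digit, show the tails $j>k$ cancel, and localize the discrepancy to the carried block) is the same as the paper's, but two concrete steps are wrong, and the step you defer as ``a finite case check'' is exactly where the difficulty sits. First, Corollary \ref{cor:abeldiff} gives $\Delta M(4m-1)=\Delta M(4(m-1)+3)=+1$ for $m\ge 1$, not $-1$; hence $a_j(z4^{-k})=+1$ for $j>k$, and likewise on the digits zeroed by the carry. The tail cancellation survives this sign slip, but your bookkeeping of the carried block, which uses ``$d(0)a_j=-1$'' on the zeroed digits, does not. Second, in your base case $z\not\equiv 3\pmod 4$ the only index at which the two series differ is $j=k$, so what must be shown is $d(z_k)\Delta M(z-1)-d(z_k+1)\Delta M(z)=-1$; the \emph{equality} you propose to verify would force the difference $a(z4^{-k})-a((z+1)4^{-k})$ to be $0$, not $-2^{-k}$. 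Moreover, carrying out that finite check with the correct values shows it does not always yield $-1$: for $z\equiv 0\pmod 4$ one gets $d(0)\Delta M(z-1)-d(1)\Delta M(z)=\Delta M(z-1)=+1$. Concretely, for $k=2$, $z=4$ one computes $a(1/4)=\sum_{j\ge 2}2^{-j}=1/2$ (the $j=1$ term dies because $d(1)=0$) and $a(5/16)=\sum_{j\ge 3}2^{-j}=1/4$, so the difference is $+2^{-k}$; in the no-carry case the difference equals $-\Delta M(z)\,2^{-k}$ in general, so only the magnitude $|a(z4^{-k})-a((z+1)4^{-k})|=2^{-k}$ can be obtained there (which is all that the estimate in Theorem \ref{dim:thm:1} actually uses). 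So your plan, as written, cannot close the stated identity in that configuration.

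Separately, the proposed induction on the length $\ell$ of the trailing run of $3$'s is not justified: you never say how the $a$-difference for a $z$ ending in $\ell$ threes is expressed through the case $\ell-1$, and these quantities do not telescope in any evident way. The paper does not induct: for the carry configuration (last digit $3$, i.e.\ $z_{h+1}=\dots=z_k=3$ with $z_h\ne 3$) it evaluates the whole carried block at once, setting $D_j:=d(z_j)a_j(z4^{-k})-d(z'_j)a_j((z+1)4^{-k})$ and proving $D_h+2^{-1}D_{h+1}=-2^{-1}$ by a short case analysis on $z_h\in\{0,1,2\}$ (and on $u=0$ versus $u\ge 1$), while $D_j=d(3)-d(0)=+1$ for $h+2\le j\le k$ and $D_j=0$ otherwise, whence $\sum_j D_j2^{-j}=-2^{-h-1}+\left(2^{-h-1}-2^{-k}\right)=-2^{-k}$. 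If you want a complete argument along your lines, prove that block identity directly (no induction on $\ell$), treat the no-carry case by the single-term computation above (separating $z\equiv 0\pmod 4$, where only the absolute value survives), and compute the endpoint $z=4^k-1$ explicitly; there $a((4^k-1)4^{-k})=2-2^{-k}$ (not $1-2\cdot 2^{-k}$) and $a(1)=1$, giving $1-2^{-k}$.
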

\begin{proof}
When $z=4^{k}-1$, we have $z4^{-k}=\sum_{j=1}^{k}3\cdot 4^{-j}$ and $(z+1)4^{-k}=1$. So \[a(z4^{-k})-a((z+1)4^{-k})=(2-2^{-k})-1=1-2^{-k}.\]

When $1\leq z\leq 4^{-k}-2$, $z4^{-k}$ and $(z+1)4^{-k}$ have the $4$-adic expansions
\[z4^{-k}=\sum_{j=1}^{k}z_{j}4^{-j} \text{ and }(z+1)4^{-k}=\sum_{j=1}^{k}z^{\prime}_{j}4^{-j}.\] Implicitly, we assume that
$z_{j}=z_{j}^{\prime}=0$ for $j>k$. Let $1\leq h\leq k$ be the integer such that $z_{h}\neq 3$ and $z_{j}=3$ for $j=h+1,\cdots, k$. Then
\[z^{\prime}_{j}=\begin{cases}z_{j} & \text{when } j<h,\\
z_{j}+1 & \text{when } j = h,\\
0 & \text{when } j> h.\end{cases}\]
Setting $D_{j}:=d(z_{j})a_{j}(z4^{-k})-d(z^{\prime}_{j})a_{j}((z+1)4^{-k})$, then \[a(z4^{-k})-a((z+1)4^{-k})=\sum_{j=1}^{\infty}D(j)2^{-j}.\]
Apparently, $D_{j}=0$ when $j<h$ or $j>k$. Since $a_{j}(z4^{-k})=a_{j}((z+1)4^{-k})=1$ for $h+2\leq j\leq k$, we have for $ h+2\leq j\leq k$,
\[D_{j}=d(3)-d(0)=1.\]
Set $u:=4^{h}\sum_{j=1}^{h}z_{j}4^{-j}$. If $u\geq 1$, we have
\begin{align*}
D_{h}+2^{-1}D_{h+1}&= \left(d(z_{h})\Delta M(u-1)-d(z_{h}^{\prime})\Delta M(u)\right)\\
& \quad +2^{-1}\left(d(3)\Delta M(4u+2)-d(0)\cdot \Delta M(4u+3)\right)\\
&= d(z_{h})\Delta M(u-1)-d(z_{h}^{\prime})\Delta M(u)+\Delta M(u)-2^{-1}\\
&= \begin{cases}
d(0)\cdot 1-d(1)\cdot (-1)+(-1)-2^{-1}, & \text{if } z_{h}=0,\\
d(1)\cdot (-1)-d(2)\cdot \Delta M(u)+\Delta M(u)-2^{-1}, & \text{if } z_{h}=1,\\
d(2)\cdot \Delta M(u)-d(3)\cdot \Delta M(u)+\Delta M(u)-2^{-1}, & \text{if } z_{h}=2,
\end{cases}\\
&= -2^{-1}.
\end{align*}
If $u=0$, then $z_{h}=0$ and
\begin{align*}
D_{h}+2^{-1}D_{h+1}&= d(0)\cdot(-1)-d(1)\Delta M(0)\\
&\quad + 2^{-1}\left(d(3)\Delta M(2)-d(0)\Delta M(3)\right)\\
& =  -2^{-1}.
\end{align*}
Therefore
\begin{align*}
a(z4^{-k})-a((z+1)4^{-k}) & = \sum_{j=1}^{\infty}D(j)2^{-j}\\
& = 2^{-h}D_{h}+2^{-h-1}D_{h+1}+ \sum_{j=h+2}^{k}D(j)2^{-j}\\
& = -2^{-h-1}+\left(2^{-h-1}-2^{-k}\right) =-2^{-k}.
\end{align*}
\end{proof}

\begin{lemma}\label{dim:lem:2}
There exists $c>0$, such that for any $x,y\in (0,1)$,
\[|a(x)-a(y)|\leq c|x-y|^{1/2}.\]
\end{lemma}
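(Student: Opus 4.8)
The plan is to prove the Hölder-$\tfrac12$ estimate for $a(\cdot)$ by combining the exact formula for the increment of $a$ across a $4$-adic interval (Lemma \ref{dim:lem:1}) with the uniform boundedness of $a$, and then interpolating. First I would recall from the definition that $a(y)=\sum_{j\ge 1}d(y_j)a_j(y)2^{-j}$ with $|d(y_j)|\le 2$ and $|a_j(y)|\le 1$ (the latter because $\Delta M$ takes values in $\{-1,1\}$ by Corollary \ref{cor:abeldiff}), so $|a(y)|\le\sum_{j\ge1}2\cdot 2^{-j}=2$; in particular $|a(x)-a(y)|\le 4$ for all $x,y\in(0,1)$. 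This handles the case where $x$ and $y$ are ``far apart'' relative to the scale $4^{-k}$.

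The core of the argument is the telescoping estimate on one scale. Fix $k\ge 1$ and let $z$ be the integer with $x,y\in \overline{I_k(z)}$, i.e. $x$ and $y$ share their first $k$ digits in the $4$-adic expansion. I would first treat the case $x=z4^{-k}$, $y=(z+1)4^{-k}$ (adjacent dyadic-type endpoints), where Lemma \ref{dim:lem:1} gives $|a(x)-a(y)|\le 2^{-k}$ exactly when $z\le 4^k-2$; note $|x-y|=4^{-k}$, so $|a(x)-a(y)|\le 2^{-k}=(4^{-k})^{1/2}=|x-y|^{1/2}$. The general case reduces to this: if $x<y$ both lie in $I_k(z)$, I would telescope $a(x)-a(y)$ along the chain of consecutive $4$-adic points of a finer scale lying between $x$ and $y$, but a cleaner route is to observe directly that if $x$ and $y$ agree in their first $k$ digits then all the terms $j\le k$ in $a(x)-a(y)$ that involve only the shared prefix cancel, and the remaining ``tail'' contributes at most $C2^{-k}$ by the same boundedness argument as above; meanwhile the first digit at which $x$ and $y$ differ, say position $h$, forces $|x-y|\ge 4^{-h}-\text{(tail)}$, which I would convert into $|x-y|\gtrsim 4^{-h}$ after accounting for the ``carry'' subtlety when the digits are maximal. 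Choosing $k$ so that $4^{-k-1}\le |x-y|< 4^{-k}$ then gives $|a(x)-a(y)|\le C2^{-k}\le C\cdot 2\sqrt{4^{-k-1}}\le 2\sqrt2\, C|x-y|^{1/2}$, and taking $c:=2\sqrt2\,C$ (or simply $c=\max\{4, 2\sqrt2\,C\}$ to cover the far-apart case) finishes it.

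The main obstacle I expect is the bookkeeping around \emph{carries}: when $x$ and $y$ lie in the same interval $I_k(z)$ but $y-x$ is small, the shared prefix may not be the first $k$ digits of \emph{both} numbers in the naive sense — one of them could look like $\dots z_h 3 3 3\dots$ near the right endpoint, exactly the situation Lemma \ref{dim:lem:1} was designed to handle via the $D_h+2^{-1}D_{h+1}$ computation. Rather than redo that computation, I would phrase the reduction so that it only ever calls Lemma \ref{dim:lem:1} on pairs of the form $(w4^{-m},(w+1)4^{-m})$: given $x<y$ in $(0,1)$, pick $m$ minimal with $\lceil x4^m\rceil\le \lfloor y4^m\rfloor$, so that there is an integer $w$ with $x\le w4^{-m}\le (w+\ell)4^{-m}\le y$ for the appropriate $\ell\ge 0$, write $a(x)-a(y)$ as a sum of one ``left piece'' $a(x)-a(w4^{-m})$, at most a bounded number $\ell$ of ``unit pieces'' $a((w+i)4^{-m})-a((w+i+1)4^{-m})$ each bounded by $2^{-m}$ via Lemma \ref{dim:lem:1}, and one ``right piece'', then bound the two end pieces by the $\le C2^{-m}$ tail estimate and use $|x-y|\asymp \ell 4^{-m}$ to close. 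I would then double-check that the $z=4^k-1$ exceptional case in Lemma \ref{dim:lem:1} (the jump of size $\approx 1$) cannot occur strictly inside $(0,1)$ for the unit pieces, since that jump sits at the point $1$, which is excluded; this is why the estimate survives despite $a$ itself being discontinuous there.
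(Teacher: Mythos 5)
Your plan is close in spirit to the paper's proof (fix a scale, use prefix cancellation for the tail, use Lemma \ref{dim:lem:1} across grid points), but the reduction you describe has a genuine gap, and it sits exactly at the carry issue you flag. First, the claim that the first position $h$ where $x$ and $y$ differ forces $|x-y|\gtrsim 4^{-h}$ ``after accounting for the carry subtlety'' cannot be rescued: in base $4$, $x=0.0333\cdots$ and $y=0.1000\cdots$ differ already in the first digit yet can be arbitrarily close, so no bookkeeping recovers that inequality. Second, your fallback reduction does not repair this. With $m$ minimal such that $\lceil x4^{m}\rceil\le\lfloor y4^{m}\rfloor$, you can have $\ell=0$ while $x$ and $y$ straddle a coarse grid point and are arbitrarily close (take $x=4^{-1}-\varepsilon$, $y=4^{-1}+\varepsilon$, so $m=1$); then $|x-y|\asymp\ell 4^{-m}$ is false and a bound of size $C2^{-m}$ is useless for a H\"older estimate. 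Moreover, the ``left piece'' $a(x)-a(w4^{-m})$ is itself a carry pair: since $x$ lies in $I_m(w-1)$, the points $x$ and $w4^{-m}$ in general share no digit prefix at all (again $x=0.0333\cdots$ versus $w4^{-m}=0.1$), so the shared-prefix tail estimate you invoke for the end pieces simply does not apply to it.

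What is actually needed is to work at the scale of $|x-y|$ itself, as the paper does: choose $k$ with $4^{-k-1}\le y-x<4^{-k}$, so $x,y$ lie in at most two adjacent level-$k$ intervals. The same-interval case is your (correct) tail estimate. In the adjacent-interval case the paper does not reduce to Lemma \ref{dim:lem:1} at grid points; it redoes the cancellation for the actual digits of $x$ and $y$, which satisfy $y_{h+1}=x_{h+1}+1$, $x_j=3$, $y_j=0$ for $h+2\le j\le k$, and the identity $D_{h+1}+2^{-1}D_{h+2}=-2^{-1}$ collapses the weighted sum to size $O(2^{-k})$ even though the digit discrepancy begins at the coarse position $h$. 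Your reduction can be repaired without redoing that computation, but only by choosing the scales per piece: if $g$ is the level-$k$ grid point between $x$ and $y$, bound $a(g)-a(y)$ by prefix cancellation at level $k$, and split $a(x)-a(g)$ as $\bigl(a(x)-a(g-4^{-K})\bigr)+\bigl(a(g-4^{-K})-a(g)\bigr)$ with $K$ chosen so that $4^{-K-1}\le g-x<4^{-K}$; then the first bracket is a prefix-cancellation pair at level $K$ and the second is a non-exceptional instance of Lemma \ref{dim:lem:1} at level $K$ (since $g<1$), giving $O(2^{-K})=O(|x-y|^{1/2})$. As written, however, your end-piece bounds and the relation $|x-y|\asymp\ell4^{-m}$ do not close the argument.
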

\begin{proof}
Let $x,y\in (0,1)$ and $x<y$. Denote their $4$-adic expansion by
\[x=\sum_{j=1}^{\infty}x_{j}4^{-j} \text{ and } y=\sum_{j=1}^{\infty}y_{j}4^{-j}.\]
Set $D_{j}:=d(x_{j})a_{j}(x)-d(y_{j})a_{j}(y)$, then $|D_{j}|\leq 4$ for $j\geq 1$.

Let $k$ be the integer such that $4^{-k-1}\leq y-x<4^{-k}$. Then $x$ and $y$ can be covered by at most two (adjacent) $4$-adic intervals of
level $k$. Suppose $x,y\in I_{k}(z)$ for some $0\leq z<4^{k}$, then $x_{j}=y_{j}$ for $i=1,2,\cdots,k$. Consequently, $D_{j}=0$ for $1\leq
j\leq k$. So
\begin{align*}
|a(x)-a(y)| & = \left|\sum_{j=k+1}^{\infty}D_{j}2^{-j}\right|\\
& \leq 4\sum_{j=k+1}^{\infty}2^{-j}=4\cdot 2^{-k}\\
& \leq 8|x-y|^{1/2}.
\end{align*}

On the other hand, suppose $x\in I_{k}(z)$ and $y\in I_{k}(z+1)$ where $0\leq z<4^{-k}-1$. Let $h$ be the largest integer such that $x,y\in
I_{h}(z^{\prime})$ for some $0\leq z^{\prime} <4^{-h}$. Apparently, $0\leq h<k$. In this case, the $4$-adic expansions of $x$ and $y$ satisfy
\begin{align*}
\begin{cases}y_{j}=x_{j}, & \mathrm{if}~ 1\leq j \leq h,\\
y_{j}=x_{j}+1, & \mathrm{if}~ j= h+1,\\
y_{j}=0~ \mathrm{and}~x_{j}=3, & \mathrm{if}~ h+2\leq j\leq k.\end{cases}
\end{align*}
(We remark that $x_{h+1}\neq 3$ by the choice of $h$.) Hence, $D_{j}=0$ for $1\leq j\leq h$. Similar discussions as in Lemma \ref{dim:lem:1}
yield that
\begin{align*}
D_{h+1}+2^{-1}D_{h+2}=-2^{-1}.
\end{align*}
Moreover, for $h+2\leq j\leq k$, $D_{j}=d(3)-d(0)=1$. Therefore,
\begin{align*}
|a(x)-a(y)|&=\left|\sum_{j=1}^{\infty}D_{j}2^{-j}\right|= \left|\sum_{j=h+1}^{k}D_{j}2^{-j}+\sum_{j=k+1}^{\infty}D_{j}2^{-j}\right|\\
& \leq \left|2^{-h-1}(D_{h+1}+2^{-1}D_{h+2})+\sum_{j=h+3}^{k}D_{j}2^{-j}\right|+4\sum_{j=k+1}^{\infty}2^{-j}\\
& = 5\cdot 2^{-k} \leq 10 |x-y|^{1/2}.
\end{align*}
\end{proof}

\subsection{Calculation of the box dimension.}

\begin{theorem} \label{dim:thm:1}
For any $0<\alpha<\beta\leq 1$,
\[\dim_{B}\{(x,\lambda(x)):\alpha<x<\beta\}=\frac{3}{2}.\]
\end{theorem}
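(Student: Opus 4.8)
The plan is to reduce the box dimension of the graph of $\lambda$ on $(\alpha,\beta)$ to that of the graph of the ``oscillating part'' $\rho(x)/\sqrt{x}+a(x)/\sqrt{x}$ and then to exploit the exact recurrence $M(4n+i)=2M(n)+1+d(i)\Delta M(4n+i-1)$ to count meshes directly. Since $\lambda(4x)=\lambda(x)$ and $\lambda$ is (by Proposition \ref{lem:lambdarho}) a continuous function with $\lambda(x)=(\rho(x)+a(x))/\sqrt{x}$, it suffices to work on a single $4$-adic block, say $[1,4)$ or (after the self-similar rescaling) any interval $I_k(z)$; standard facts about box dimension (finite stability, bi-Lipschitz invariance, insensitivity to the smooth factor $1/\sqrt{x}$ on a compact interval bounded away from $0$) let me replace $\lambda$ by $x\mapsto \rho(x)+a(x)=M(x)+1+a(x)$ on $[1,4)$ and then, via the self-similarity, on $[0,1)$ after a harmless affine change. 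So the real object is the graph of $g(x):=M(\lfloor x\rfloor \text{-scaled})+a(x)$ read off the $4$-adic digits of $x$.

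The key quantitative input is a two-sided estimate for the oscillation of $g$ on a $4$-adic interval $I_k(z)=[z4^{-k},(z+1)4^{-k})$. First I would establish the upper bound: Lemma \ref{dim:lem:2} gives $|a(x)-a(y)|\le c|x-y|^{1/2}$, and a parallel (easier) argument — using that $M(4^k x)=2^k M(x)+\sum_{j\le k} d(x_j)a_j(x)2^{k-j}+(2^k-1)$ from the proof of Proposition \ref{lem:lambdarho}, so that on $I_k(z)$ the value $M(4^k x)-2^k M(x)$ is determined by the tail digits and is $O(2^k)$ — shows the oscillation of $g$ on $I_k(z)$ is $O(2^{-k})=O(\sqrt{4^{-k}})$. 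Hence $N_\delta(\mathrm{graph})\le C\,\delta^{-1}\cdot \delta^{-1/2}=C\delta^{-3/2}$ with $\delta=4^{-k}$, giving $\overline{\dim}_B\le 3/2$. For the lower bound I need the oscillation to be $\gtrsim \sqrt{4^{-k}}$ on a positive proportion of the $4^k$ intervals of level $k$; here is where Lemma \ref{dim:lem:1} does the work — it says the net change of $a$ across any level-$k$ $4$-adic interval is exactly $-2^{-k}$ (away from the right endpoint), but of course the \emph{oscillation} of $M$ itself is the dominant, order-$2^{-k}$, contribution. Concretely I would pick, inside each level-$k$ interval, two dyadic-rational arguments whose $M$-values (via the $M(4n+i)$ recurrence and Corollary \ref{cor:abeldiff}) differ by at least $c\,2^k$ before dividing by $2^k$ — e.g. comparing a point landing near a local maximum pattern of $M$ (like the values $M(j4^k)=2^k(M(j)+2)-2$ region) against one landing near a minimum — so that $g$ oscillates by $\ge c'2^{-k}$ on that interval, forcing at least $c'2^{-k}/\delta = c'2^{-k}/4^{-k}=c'2^k$ extra meshes per column. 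Summing over the $4^k$ columns gives $N_\delta\ge c'4^k\cdot 2^k\cdot(4^{-k})^{0}\sim \delta^{-3/2}$, i.e. $\underline{\dim}_B\ge 3/2$.

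The main obstacle is the lower bound: it is not enough that $g$ oscillates by $\sim 2^{-k}$ \emph{somewhere}; I need this on $\Omega(4^k)$ of the level-$k$ intervals simultaneously, uniformly in $k$. To get this I would set up a renewal/self-similarity argument on the digit string: using the exact recurrences $M(4n)=2M(n)+2$, $M(4n+1)=2M(n)+1$, $M(4n+2)=M(n)+M(n+1)+1$, $M(4n+3)=2M(n+1)+1$ together with $\Delta M(4n)=-\Delta M(4n+3)=-1$, one sees that within every level-$(k-1)$ interval, refining by one more digit produces a controlled ``wiggle'' of $M$ of size comparable to $\Delta M$ evaluated higher up, which by the automaton in Figure \ref{fig:coro1} is $\pm1$ infinitely often with positive density. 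Propagating this down $k$ levels shows the fraction of level-$k$ intervals on which $|M(x)-M(x')|\ge c\,2^k$ (for suitable endpoints $x,x'$ of sub-intervals) stays bounded below, which is exactly the density statement needed. Finally I would assemble: $\overline{\dim}_B\le 3/2$ and $\underline{\dim}_B\ge 3/2$ give $\dim_B=3/2$ on $[0,1)$, and then finite stability of box dimension over the finitely many $4$-adic blocks covering any $(\alpha,\beta)\subset(0,1]$, plus $\lambda(4x)=\lambda(x)$, upgrade this to arbitrary $0<\alpha<\beta\le 1$.
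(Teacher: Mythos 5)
Your upper bound is essentially the paper's: Hölder-$\tfrac12$ continuity of $\lambda$ on $(\alpha,\beta)$ via Lemma \ref{dim:lem:2} (the paper notes that $\rho\equiv 1$ on $(0,1)$, so $\lambda=(1+a(x))/\sqrt{x}$ there, and invokes the standard mesh-counting bound for Hölder graphs), and removing the smooth factor $1/\sqrt{x}$ by a bi-Lipschitz map is legitimate. The gap is in your lower bound. Your guiding claim that ``the oscillation of $M$ itself is the dominant, order-$2^{-k}$, contribution'' is not right in this setting: on $(\alpha,\beta)\subseteq(0,1]$ one has $M(\lfloor x\rfloor)\equiv 0$ and $\rho\equiv 1$, and inside any level-$k$ column the integer part is constant at all finer scales as well, so \emph{all} the fine-scale oscillation of $\lambda$ is carried by $a(x)$ (plus the smooth $1/\sqrt{x}$ correction). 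Having set aside Lemma \ref{dim:lem:1} on this mistaken ground, you are forced into a substitute argument — a positive proportion of level-$k$ columns on which suitably chosen $4$-adic points have $M$-values differing by $\ge c2^{k}$, propagated by a ``renewal/self-similarity'' argument on the automaton — and that is exactly the step you never prove: the uniform-in-$k$ positive density of good columns, and moreover its validity inside an \emph{arbitrary} subinterval $(\alpha,\beta)$, is asserted, not established. Your closing appeal to finite stability of box dimension cannot repair this, since finite stability transfers upper bounds over unions but does not give a lower bound on a subinterval from one proved on $[0,1)$; nor does $\lambda(4x)=\lambda(x)$ map $[0,1)$ onto a given $4$-adic sub-block.

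The irony is that the density machinery is unnecessary: Lemma \ref{dim:lem:1}, which you cite and then discard, already supplies the needed per-column oscillation at \emph{every} column. It says $a(z4^{-k})-a((z+1)4^{-k})=-2^{-k}$ for all $1\le z\le 4^{k}-2$, so the endpoint variation of $\lambda=(1+a)/\sqrt{x}$ over $I_{k}(z)$ is at least $\tfrac{1}{2\sqrt{\beta}}2^{-k}$ once $k$ is large enough that $3\cdot 2^{k}<\lfloor\alpha 4^{k}\rfloor$ (which tames the $1/\sqrt{x}$ correction; this is how the paper handles the factor you remove bi-Lipschitzly). Each of the $\sim(\beta-\alpha)4^{k}$ columns then meets at least $\gtrsim 2^{-k}/4^{-k}=2^{k}$ mesh squares, giving $N_{4^{-k}}\gtrsim 2^{3k}$ and $\underline{\dim}_{B}\ge \tfrac32$ on every subinterval directly. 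As written, your proposal does not close the lower bound; redirected through Lemma \ref{dim:lem:1} in this way, it becomes the paper's proof.
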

\begin{proof}
For any $x,y\in (\alpha,\beta)$ and $x<y$, $\rho(x)=\rho(y)=\rho(0)=1$,
\begin{align*}
|\lambda(x)-\lambda(y)| & = \left|\frac{\rho(x)+a(x)}{\sqrt{x}}-\frac{\rho(y)+a(y)}{\sqrt{y}}\right|\\
&=\left|\frac{a(x)+1}{\sqrt{x}}-\frac{a(y)+1}{\sqrt{y}}\right|\\
&= \left|\frac{a(x)-a(y)}{\sqrt{x}}+\frac{\sqrt{y}-\sqrt{x}}{\sqrt{xy}}(a(y)+1)\right|\\
& \leq \alpha^{-1/2}|a(x)-a(y)|+3\alpha^{-1}\sqrt{y-x} \\
& \leq (c\alpha^{-1/2}+3\alpha^{-1})|x-y|^{1/2}
\end{align*}
where the last inequality holds by Lemma \ref{dim:lem:2}. Now by \cite[Corollary 11.2 (a)]{F04},
\begin{equation}
\overline{\dim}_{B}\{(x,\lambda(x)):\alpha<x<\beta\}\leq \frac{3}{2}.\label{dim:eq:u}
\end{equation}

For any $k\geq 1$, let $N_{k}$ be the number of $4^{-k}$-mesh squares that intersect the graph of $\lambda(x)$ on $(\alpha,\beta)$. For any
$k\geq 1$ and $\lfloor \alpha 4^{k}\rfloor< z \leq \lfloor \beta 4^{k}\rfloor$, the number of $4^{-k}$-mesh squares that intersect the graph of
$\lambda(x)$ on $I_{k}(z)$ is lager than $\left|\lambda((z+1)4^{-k})-\lambda(z4^{-k})\right|/4^{-k}$.
Choose $K_{1}$ large enough such that for all $k>K_{1}$, $3\cdot 2^k<\lfloor \alpha 4^k\rfloor ~(<z)$. Then, by Lemma \ref{dim:lem:1},
\begin{align*}
\left|\lambda((z+1)4^{-k})-\lambda(z4^{-k})\right| & =
\left|\frac{1+a((z+1)4^{-k})}{\sqrt{(z+1)4^{-k}}}-\frac{1+a(z4^{-k})}{\sqrt{z4^{-k}}}\right|\\
& = \frac{1}{\sqrt{z4^{-k}}}\left|a((z+1)4^{-k})-a(z4^{-k})\right.\\
& \quad +\left. \frac{\sqrt{z4^{-k}}-\sqrt{(z+1)4^{-k}}}{\sqrt{(z+1)4^{-k}}}(1+a((z+1)4^{-k}))\right|\\
& \geq \frac{1}{\sqrt{\beta}}\left(2^{-k}-\frac{\left|1+a((z+1)4^{-k})\right|}{z+1+\sqrt{z^{2}+z}}\right)\\
& \geq 2^{-k}\cdot \frac{1}{\sqrt{\beta}}\left(1-\frac{3\cdot 2^{k}}{z+1+\sqrt{z^{2}+z}}\right) >\frac{1}{2\sqrt{\beta}}\cdot 2^{-k}.
\end{align*}
Choose $K_{2}$ large enough such that for all $k>K_{2}$, $\lfloor \beta 4^{k}\rfloor-\lfloor \alpha 4^{k}\rfloor-1>4^{k}(\beta-\alpha)/2$.
Hence, for any $k>\max\{K_{1}, K_{2}\}$,
\begin{align*}
N_{k} & \geq \sum_{\lfloor \alpha 4^{k}\rfloor< z < \lfloor \beta
4^{k}\rfloor}\frac{\left|\lambda((z+1)4^{-k})-\lambda(z4^{-k})\right|}{4^{-k}}\\
& \geq \frac{1}{2\sqrt{\beta}}\sum_{\lfloor \alpha 4^{k}\rfloor< z < \lfloor \beta 4^{k}\rfloor}\frac{2^{-k}}{4^{-k}} =  \frac{\lfloor \beta
4^{k}\rfloor-\lfloor \alpha 4^{k}\rfloor-1}{2\sqrt{\beta}}\cdot 2^{k}\\
& > \frac{\beta-\alpha}{4\sqrt{\beta}}\cdot 2^{3k}.
\end{align*}
Therefore
\begin{align}
\underline{\dim}_{B}\{(x,\lambda(x)):\alpha<x<\beta\} & = \liminf_{k\to\infty}\frac{\log N_{k}}{-\log 4^{-k}}\notag \\
& \geq \liminf_{k\to\infty}\frac{\log \left(2^{3k}(\beta-\alpha)/{4\sqrt{\beta}}\right)}{-\log 4^{-k}}=\frac{3}{2}. \label{dim:eq:l}
\end{align}
The result follows from \eqref{dim:eq:u} and \eqref{dim:eq:l}.
\end{proof}

\begin{corollary}
For any $0<\alpha <\beta$,
\[\dim_{B}\{(x,\lambda(x)):\alpha<x<\beta\}=\frac{3}{2}.\]
\end{corollary}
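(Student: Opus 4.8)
The plan is to reduce to Theorem~\ref{dim:thm:1} using the self-similarity $\lambda(4x)=\lambda(x)$, which iterated gives $\lambda(4^{k}x)=\lambda(x)$ for all $k\ge 1$ and $x>0$. Given $0<\alpha<\beta$, I would first pick a nonnegative integer $k$ with $4^{-k}\beta\le 1$ (one may take $k=0$ if $\beta\le1$ and $k=\lceil\log_{4}\beta\rceil$ otherwise), so that $0<4^{-k}\alpha<4^{-k}\beta\le1$ and hence $(4^{-k}\alpha,4^{-k}\beta)$ is a subinterval of $(0,1]$ to which Theorem~\ref{dim:thm:1} applies.

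Next I would consider the affine map $\phi\colon\mathbb{R}^{2}\to\mathbb{R}^{2}$ defined by $\phi(x,y)=(4^{-k}x,y)$. It is bi-Lipschitz, since $\min\{4^{-k},1\}\,|p-q|\le|\phi(p)-\phi(q)|\le\max\{4^{-k},1\}\,|p-q|$ for all $p,q\in\mathbb{R}^{2}$. Writing $G=\{(x,\lambda(x)):\alpha<x<\beta\}$ and substituting $u=4^{-k}x$, the identity $\lambda(x)=\lambda(4^{k}u)=\lambda(u)$ shows that $\phi(G)=\{(u,\lambda(u)):4^{-k}\alpha<u<4^{-k}\beta\}$. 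Because the upper and lower box dimensions are preserved under bi-Lipschitz maps \cite{F04}, we obtain $\dim_{B}G=\dim_{B}\phi(G)=3/2$ by Theorem~\ref{dim:thm:1}.

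There is essentially no obstacle here; the only point worth flagging is why one cannot simply rerun the mesh count of Theorem~\ref{dim:thm:1} directly on $(\alpha,\beta)$: that computation uses $\rho(x)\equiv1$ on $(0,1)$, which fails once the interval contains integers, and the self-similarity reduction is exactly what circumvents this. If a self-contained argument were preferred, one could instead transport the count through $\phi$ --- a $4^{-m}$-mesh square meeting $\phi(G)$ pulls back to a $4^{k-m}\times4^{-m}$ rectangle meeting $G$, coverable by at most a constant (depending only on $k$) number of $4^{-m}$-mesh squares, and conversely --- so $N_{4^{-m}}(G)$ and $N_{4^{-m}}(\phi(G))$ differ only by a bounded factor and the two limits defining the box dimension coincide.
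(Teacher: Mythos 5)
Your proof is correct and is essentially the paper's own argument: both rescale by a power of $4$ using the self-similarity $\lambda(4x)=\lambda(x)$ and invoke bi-Lipschitz invariance of box dimension to reduce to Theorem~\ref{dim:thm:1}; you merely write the map in the direction $(x,y)\mapsto(4^{-k}x,y)$ while the paper uses its inverse $(x,\lambda(x))\mapsto(4^{K}x,\lambda(4^{K}x))$. The extra remarks (why the mesh count cannot be rerun directly, and the alternative direct mesh comparison) are fine but not needed.
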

\begin{proof}
Let $K$ be an integer such that $\beta/4^{K}\leq 1$.
Since $\lambda(4x)=\lambda(x)$ for $x>0$, the following mapping
\[f:(x,\lambda(x))\mapsto	(4^{K}x,\lambda(4^{K}x))\]
is a bi-Lipschitz mapping in $\mathbb{R}^{2}$, and
\begin{align*}
f\left(\{(x,\lambda(x)):4^{-K}\alpha<x<4^{-K}\beta\}\right) & = \left\{\left(4^{K}x,\lambda(4^{K}x)\right):4^{-K}\alpha<x<4^{-K}\beta\right\}\\
& = \{(y,\lambda(y)):\alpha<y<\beta\}.
\end{align*}
The result follows from Theorem \ref{dim:thm:1} and the above equation.
\end{proof}

\section*{References}

\end{document}